\documentclass[12pt,reqno]{amsart}
\usepackage{indentfirst, amssymb, amsmath, amsthm, mathrsfs, setspace, indentfirst, enumerate,  mathrsfs, amsmath, amsthm}
\usepackage[bookmarksnumbered, colorlinks, plainpages]{hyperref}
\usepackage{mathrsfs}
\usepackage{cite}
\usepackage{tikz}
\usepackage{graphicx}
\usetikzlibrary{arrows.meta,calc,decorations.pathreplacing,positioning}
\usetikzlibrary{shapes.geometric}
\usepackage{float}
\usepackage{pgfplots}
\usetikzlibrary{shadows}
\pgfplotsset{compat=1.18}

\usepgfplotslibrary{fillbetween}
\definecolor{curveblue}{RGB}{30,90,220}
\definecolor{negred}{RGB}{255,182,193}
\definecolor{posgreen}{RGB}{40,150,70}
\definecolor{rootdark}{RGB}{120,40,20}
\usepgfplotslibrary{fillbetween}
\definecolor{c1}{RGB}{70,40,200}

\definecolor{c2}{RGB}{213,14,0}

\definecolor{c3}{RGB}{0,158,10}

\definecolor{c4}{RGB}{230,126,34}
\newtheorem*{theoA}{Theorem A}
\newtheorem*{theoB}{Theorem B}
\newtheorem*{theoC}{Theorem C}
\newtheorem*{theoD}{Theorem D}
\newtheorem*{theoE}{Theorem E}
\newtheorem*{theoF}{Theorem F}

\newtheorem*{cor A}{Corollary A}
\newtheorem*{cor B}{Corollary B}

\newtheorem{theo}{Theorem}[section]
\newtheorem{lem}{Lemma}[section]
\newtheorem{cor}{Corollary}[section]

\newtheorem{prob}{Problem}[section]
\newtheorem{defi}{Definition}[section]

\newcommand{\ol}{\overline}
\newcommand{\be}{\begin{equation}}
	\newcommand{\ee}{\end{equation}}
\newcommand{\beas}{\begin{eqnarray*}}
	\newcommand{\eeas}{\end{eqnarray*}}
\newcommand{\bea}{\begin{eqnarray}}
	\newcommand{\eea}{\end{eqnarray}}

\numberwithin{equation}{section}

\begin{document}
\title[C\MakeLowercase {oefficient estimates and}......]{\LARGE C\Large\MakeLowercase {oefficient estimates and}  B\MakeLowercase{ohr Phenomenon for Pluriharmonic Mappings in the polydisc}}

\date{}
\author[S. M\MakeLowercase{ajumder}, A. B\MakeLowercase{anerjee}, S. P\MakeLowercase{anja} \MakeLowercase{and} D. P\MakeLowercase{ramanik} ]{S\MakeLowercase{ujoy} M\MakeLowercase{ajumder}$^{1}$, A\MakeLowercase{bhijit} B\MakeLowercase{anerjee}$^2$, S\MakeLowercase{hantanu} P\MakeLowercase{anja}$^3$$^*$ \MakeLowercase{and} D\MakeLowercase{ebabrata} P\MakeLowercase{ramanik}$^4$}

\address{$^{1}$ Sujoy Majumder, Department of Mathematics, Raiganj University, Raiganj, West Bengal-733134, India.}
\email{sm05math@gmail.com, sjm@raiganjuniversity.ac.in}
\address{$^{2}$ Abhijit Banerjee, Department of Mathematics, University of Kalyani, West Bengal 741235, India.}
\email{abanerjee\_kal@yahoo.co.in, abhijitbanerjee@klyuniv.ac.in}
\address{$^3$ Shantanu Panja, Department of Mathematics, University of Kalyani, West Bengal 741235, India.}
\email{panjasantu07@gmail.com}
\address{$^{4}$ Debabrata Pramanik, Department of Mathematics, Raiganj University, Raiganj, West Bengal-733134, India.}
\email{debumath07@gmail.com}

\renewcommand{\thefootnote}{}
\footnote{2020 \emph{Mathematics Subject Classification}: 32A10, 30C45, 30C62, 30C75.}
\footnote{\emph{Key words and phrases}: Pluriharmonic mappings, polydisk, Several complex variables, sharp coefficient bounds, growth estimates and Bohr radius.}
\footnote{*\emph{Corresponding Author}: Shantanu Panja.}

\renewcommand{\thefootnote}{\arabic{footnote}}
\setcounter{footnote}{0}

\begin{abstract}
	We introduce the class $\mathscr{P}_{\mathcal{H}_n^0}(\alpha)$ $(0\leq \alpha<1)$ of normalized pluriharmonic mappings in the setting of several complex variables. This class extends the harmonic family $\mathscr{P}_{\mathcal{H}}^{0}(\alpha)$ to the multidimensional framework. We establish sharp coefficient estimates and growth theorems for functions in $\mathscr{P}_{\mathcal{H}_n^0}(\alpha)$, thereby generalizing the corresponding results of Li and Ponnusamy \cite{Li-Ponnusamy-2013a} and Allu and Halder \cite{Allu-Halder-2021}. We further determine the associated Bohr radius and investigate the sections (partial sums) of functions in this class, obtaining quantitative results that describe the behavior of their truncated expansions.
\end{abstract}
\thanks{Typeset by \AmS -\LaTeX}
\maketitle
\section{\bf Introduction}
Pluriharmonic mappings form one of the fundamental classes of mappings in several complex variables, providing a natural higher-dimensional analogue of planar harmonic mappings. Every pluriharmonic mapping admits a local decomposition into the sum of a holomorphic and an anti-holomorphic mapping, thereby inheriting many of the geometric characteristics of holomorphic functions while allowing considerably greater flexibility. Owing to this rich structure, pluriharmonic mappings have become an indispensable tool in geometric function theory, where they have been extensively employed in the study of univalence criteria, coefficient estimates, growth and distortion theorems, covering results, Landau--Bloch type theorems, Lipschitz continuity and various extremal problems. Their intimate connection with complex geometry and elliptic partial differential equations continues to stimulate substantial research activity in higher-dimensional complex analysis.\vspace{1.2mm}

Beyond their intrinsic mathematical significance, pluriharmonic mappings possess important connections with mathematical physics. Harmonic and pluriharmonic functions arise naturally as solutions of elliptic boundary value problems describing equilibrium phenomena, including electrostatic and gravitational potentials, steady-state heat conduction and incompressible fluid flow. Moreover, pluriharmonic maps play a significant role in complex differential geometry, particularly in the study of K\"ahler manifolds, energy-minimizing variational problems, nonlinear sigma models and minimal surfaces. Such geometric structures also appear in several mathematical formulations of general relativity and string theory, where complex analytic techniques provide an effective framework for investigating field configurations and geometric properties of the underlying spaces. These diverse applications further underscore the importance of understanding the geometric behaviour of pluriharmonic mappings.\vspace{1.2mm}

Motivated by these developments, considerable attention has recently been devoted to coefficient problems for pluriharmonic mappings. Among the various coefficient phenomena, the celebrated theorem of Bohr occupies a central position. In his pioneering work of 1914, Bohr discovered the remarkable fact that if $f(z)=\sum_{n=0}^{\infty}a_nz^n$ is analytic and satisfies $|f(z)|<1$ in the unit disk $\mathbb{D}$, then
\[
\sum_{n=0}^{\infty}|a_n|r^n\le1,
\]
whenever $r\le1/3$, where the constant $1/3$ is best possible. This unexpected relationship between the modulus of a bounded analytic function and the majorant of its Taylor coefficients has become one of the cornerstones of geometric function theory.\vspace{1.2mm}

During the past century, Bohr's inequality has undergone remarkable developments and now extends far beyond its original formulation. Sharp Bohr radii have been established for numerous families of analytic, harmonic and pluriharmonic mappings, as well as for subordinate functions, weighted coefficient problems, operator-valued mappings and holomorphic functions on multidimensional domains. These advances have revealed profound interactions between the Bohr phenomenon, geometric function theory, functional analysis and several complex variables, thereby transforming Bohr's original theorem into a fundamental principle of modern analysis.\vspace{1.2mm}

Throughout this paper we work in the setting of several complex variables. For the convenience of the reader, we first summarize the notation and basic concepts that will be used repeatedly in the sequel.\par

For $a=(a_1,\ldots,a_n)\in\mathbb C^n$ and
$r=(r_1,\ldots,r_n)\in\mathbb R^n$ with $r_j>0$, define the open polydisc by
$\mathbb P\Delta(a;r)=\left\{z\in\mathbb C^n:|z_j-a_j|<r_j,\;
j=1,\ldots,n\right\}.$ The vector $a$ is referred to as the centre, whereas $r$ is called the corresponding polyradius.  In particular, $\mathbb{P}\Delta(0;1)=\mathbb{P}\Delta(0_n;1_n)$, where $0_n=(0,0,\ldots,0)$ and $1_n=(1,1,\ldots,1)$. This polydisc is called the unit polydisc in $\mathbb{C}^n$. The unit disk in the complex plane is denoted by $\mathbb{D}$. 

Let $\Omega_n\subset\mathbb C^n$ be a simply connected domain containing
$\mathbb P\Delta(0;1)$.
We denote by
$\mathcal F_n(\Omega_n)$
the family of holomorphic functions on $\Omega_n$.
Furthermore, $$\mathcal B_n(\Omega_n)=\left\{f\in\mathcal F_n(\Omega_n):f(\Omega_n)\subseteq\overline{\mathbb D}
\right\}$$ denotes the class of bounded holomorphic mappings from $\Omega_n$ into the closed unit disc.
When $n=1$ we simply write $\mathcal H(\Omega)=\mathcal H_1(\Omega_1)$ for the classical family of analytic functions on $\Omega$.

The standard multi-index notation will be employed throughout.
\par For $\alpha=(\alpha_1,\ldots,\alpha_n)\in\mathbb N_0^n$,
its order and factorial are respectively defined by
$|\alpha|=\sum_{j=1}^{n}\alpha_j,\qquad\alpha!=\alpha_1!\cdots\alpha_n!.$

Correspondingly, for $z=(z_1,\ldots,z_n)\in\mathbb C^n$,
$z^\alpha=\prod_{j=1}^{n}z_j^{\alpha_j},\qquad|z|^\alpha
=\prod_{j=1}^{n}|z_j|^{\alpha_j}.$

\smallskip
Every function
$f\in\mathcal F_n(\mathbb P\Delta(0;1))$
admits an absolutely convergent homogeneous power-series expansion in $z_1,\ldots,z_n$, of the form

\begin{align}\label{In-1.1}
	f(z)=\sum\limits_{\alpha_1,\alpha_2,\ldots,\alpha_n=0}^{\infty} a_{\alpha_1,\alpha_2,\ldots,\alpha_n}z_1^{\alpha_1}z_2^{\alpha_2}\ldots z_n^{\alpha_n}=
	\sum\limits_{m=0}^{\infty}\sum\limits_{|\alpha|=m} a_{\alpha}z^{\alpha}=\sum\limits_{|\alpha|=0}^{\infty} P_{|\alpha|}(z),
\end{align}
which is absolutely convergent in $\mathbb{P}\Delta(0;1)$, where the term $P_k(z)$ is a homogeneous polynomial in $z_1,z_2,\ldots,z_n$ of degree $k$.

\subsection{\bf {Basic ideas of pluriharmonic mapping}}
To introduce pluriharmonic mappings, we express each complex coordinate as
$z_j=x_j+iy_j,\qquad j=1,\ldots,n,$
where $x_j,y_j\in\mathbb R$.
For a complex-valued mapping
$f(z)=u(x,y)+iv(x,y),$ the functions $u$ and $v$ denote its real and imaginary parts, respectively, with $x=(x_1,\ldots,x_n)$ and $y=(y_1,\ldots,y_n)$.

The Cauchy--Riemann equations for each $z_j\;(j=1,\dots,n)$ are
\begin{align}\label{Eq 1.1}
	\frac{\partial u}{\partial x_j}
	=
	\frac{\partial v}{\partial y_j}\quad \text{and}\quad
	\frac{\partial u}{\partial y_j}
	=
	-\,\frac{\partial v}{\partial x_j}
	\qquad (j=1,\dots,n).
\end{align}

By differentiating (\ref{Eq 1.1}) with respect to $x_k$ and $y_k$, we see that both $u$ and $v$ satisfy the
following system of partial differential equations of second order:
\begin{align}\label{Eq 1.2}
	\frac{\partial^2}{\partial x_j \partial x_k}
	+
	\frac{\partial^2}{\partial y_j \partial y_k}
	= 0
	\quad \text{and} \quad
	\frac{\partial^2}{\partial x_j \partial y_k}
	-
	\frac{\partial^2}{\partial x_k \partial y_j}
	= 0
	\qquad (j,k = 1,\dots,n).
\end{align}

For a complex variable $z_j = x_j + i y_j$, we define
\begin{align}\label{Eq 1.3}
	\frac{\partial}{\partial z_j}
	= \frac{1}{2}\left( \frac{\partial}{\partial x_j}
	- i \frac{\partial}{\partial y_j} \right),\quad
	\frac{\partial}{\partial \bar z_j}
	= \frac{1}{2}\left( \frac{\partial}{\partial x_j}
	+ i \frac{\partial}{\partial y_j} \right).
\end{align}
\begin{align*}
	\partial =\sum\limits_{j=1}^n \frac{\partial }{\partial z_j} d z_j,\quad \ol{\partial} =\sum\limits_{j=1}^n \frac{\partial }{\partial \ol z_j} d \ol {z}_j \quad \text{and} \quad d=\partial+\ol{\partial}.
\end{align*}

Combining the operators introduced in \eqref{Eq 1.3} with straightforward differentiation yields 
\begin{align}\label{Eq 1.5}
	4\frac{\partial^2 f(z)}{\partial \bar z_j \partial z_k}=&\frac{\partial^2 u(x,y)}{\partial x_j x_k}+\frac{\partial^2 u(x,y)}{\partial y_j y_k}+i\left(\frac{\partial^2 v(x,y)}{\partial x_j x_k}+\frac{\partial^2 v(x,y)}{\partial y_j y_k}\right)\\&-i\left(\frac{\partial^2 u(x,y)}{\partial x_j y_k}-\frac{\partial^2 u(x,y)}{\partial x_k y_j}\right)+\left(\frac{\partial^2 v(x,y)}{\partial x_j y_k}-\frac{\partial^2 v(x,y)}{\partial x_k y_j}\right).\nonumber
\end{align}

Finally, let $\Omega_n$ be an open subset of $\mathbb{C}^n$. A mapping $f:\Omega_n\rightarrow\mathbb{C}$ is holomorphic if and only if it satisfies the
Cauchy--Riemann equations in complex form, namely,
\begin{align}\label{Eq 1.4}
	\bar{\partial}f=0,
	\qquad\text{or equivalently}\qquad
	\frac{\partial f(z)}{\partial\bar z_j}=0,
	\quad j=1,2,\ldots,n,
\end{align} throughout $\Omega_n$.

\subsection*{{\bf Pluriharmonic mappings}}

Let $\phi(x,y)$ be a real-valued function,
$x=(x_1,\ldots,x_n)$ and $y=(y_1,\ldots,y_n)$. We say that $\phi$ is \emph{pluriharmonic} if it satisfies the conditions \eqref{Eq 1.2}. Consequently, a continuous complex-valued function
$f(z)=u(x,y)+iv(x,y),$
defined on a domain $\Omega_n\subset\mathbb{C}^n$, is called a \emph{pluriharmonic mapping} whenever both $u(x,y)$ and $v(x,y)$ are real-valued pluriharmonic functions in $\Omega_n$.
Furthermore, if the functions $u(x,y)$ and $v(x,y)$ satisfy \eqref{Eq 1.1}, then $v(x,y)$ is referred to as a \emph{pluriharmonic conjugate} of $u(x,y)$.

\par
Now assume that $f(z)=u(x,y)+iv(x,y),$
where $u$ and $v$ possess continuous second-order partial derivatives.
It follows immediately from \eqref{Eq 1.4} and \eqref{Eq 1.5} that, whenever
$f$ is pluriharmonic, each complex partial derivative
$\frac{\partial f(z)}{\partial z_j},\; j=1,2,\ldots,n,$ is holomorphic in $\Omega_n$.

\vspace{1.5mm}

\par
Suppose next that $\Omega_n\subset\mathbb{C}^n$ is simply connected and let
$f$ be a complex-valued pluriharmonic mapping in $\Omega_n$. Since
$\partial f(z)/\partial z_j$ is holomorphic for every
$j=1,2,\ldots,n$, there exists a holomorphic function $h(z)$ on
$\Omega_n$ satisfying $\frac{\partial h(z)}{\partial z_j}=\frac{\partial f(z)}{\partial z_j},\;
j=1,2,\ldots,n.$
Define
$g(z)=\overline{f(z)}-\overline{h(z)}.$
Then, by the choice of $h$,
$$
\frac{\partial g(z)}{\partial\overline{z_j}}=
\overline{\frac{\partial f(z)}{\partial z_j}}-
\overline{\frac{\partial h(z)}{\partial z_j}}
=0, \qquad z\in\Omega_n,\quad j=1,2,\ldots,n.$$
Hence $g$ is holomorphic in $\Omega_n$. Consequently, every pluriharmonic mapping admits the canonical decomposition
$$f(z)=h(z)+\overline{g(z)},$$
where both $h$ and $g$ are holomorphic functions in $\Omega_n$.
\subsection{{\bf Different classes of pluriharmonic mappings}}

Let $\mathcal{H}_n$ denote the family of complex-valued pluriharmonic mappings
defined on the unit polydisk $\mathbb{P}\Delta(0;1)$ and normalized by
\[
f(0)=0
\quad \text{and} \quad
\nabla f(0):=
\left(
\frac{\partial f(0)}{\partial z_1},
\ldots,
\frac{\partial f(0)}{\partial z_n}
\right)
=(1,1,\ldots,1).
\]

When $\dim(\mathbb{C}^n)=1$, the class
$\mathcal{H}(=\mathcal{H}_1)$ coincides with the family of normalized
complex-valued harmonic mappings in the unit disk $\mathbb{D}$ satisfying
\[
f(0)=0
\quad \text{and} \quad
\frac{\partial f(0)}{\partial z}=1.
\]

As established in the preceding subsection, every mapping
$f\in\mathcal{H}_n$ admits the canonical decomposition
\[
f=h+\overline{g},
\]
where both $h$ and $g$ are holomorphic in
$\mathbb{P}\Delta(0;1)$. Throughout this paper, $h$ and $g$ will be referred
to as the \emph{holomorphic part} and the \emph{co-holomorphic part} of $f$,
respectively. Their corresponding power series expansions are given by
\[
h(z)
=
\sum_{j=1}^{n}z_j
+
\sum_{k=2}^{\infty}
\sum_{|\beta|=k}
a_{\beta}z^{\beta},
\qquad
g(z)
=
\sum_{k=1}^{\infty}
\sum_{|\beta|=k}
b_{\beta}z^{\beta}.
\]

Observe that, in the special case $g\equiv0$, the class
$\mathcal{H}_n$ reduces to the family $\mathcal{A}_n$ consisting of
holomorphic mappings in $\mathbb{P}\Delta(0;1)$ satisfying the normalization
conditions
\[
f(0)=0
\quad \text{and} \quad
\nabla f(0)
=
(1,1,\ldots,1).
\]
Moreover, if $\dim(\mathbb{C}^n)=1$, then
$\mathcal{A}(=\mathcal{A}_1)$ is precisely the classical family of normalized
holomorphic functions in the unit disk $\mathbb{D}$ with
\[
f(0)=0
\quad \text{and} \quad
\frac{\partial f(0)}{\partial z}=1.
\]

A particularly important subclass of $\mathcal{H}_n$ is defined by
\[
\mathcal{H}_n^{0}
=
\left\{
f\in\mathcal{H}_n:
\overline{\partial}f(0)=0
\right\}.
\]

Accordingly, if
$f=h+\overline{g}\in\mathcal{H}_n^{0}$,
then the holomorphic and co-holomorphic components admit the expansions
\begin{align}\label{Eq 1.7}
	h(z)
	&=
	\sum_{j=1}^{n}z_j
	+
	\sum_{k=2}^{\infty}
	\sum_{|\beta|=k}
	a_{\beta}z^{\beta},
	\qquad
	g(z)
	=
	\sum_{k=2}^{\infty}
	\sum_{|\beta|=k}
	b_{\beta}z^{\beta},
\end{align}
respectively. The class $\mathcal{H}_n^{0}$ constitutes the principal setting
for the present investigation.
Let $\mathcal{S}_{\mathcal{H}}$ denote the class of univalent and
sense-preserving harmonic mappings belonging to $\mathcal{H}$.

Among the subclasses of $\mathcal{H}_n$, those determined by geometric
properties of the image domain play a fundamental role. A pluriharmonic
mapping $f\in\mathcal{H}_n$ is called \emph{starlike} if the image
$f(\mathbb{P}\Delta(0;1))$ is a starlike domain with respect to the
origin. The corresponding family of pluriharmonic starlike mappings is denoted by $\mathcal{S}^{*}_{\mathcal{H}_n}.$

Similarly, a mapping $f\in\mathcal{H}_n$ is said to be
\emph{convex} whenever the image domain
$f(\mathbb{P}\Delta(0;1))$ is convex. The collection of all such mappings will be denoted by
$\mathcal{K}_{\mathcal{H}_n}.$

Furthermore, a mapping $f\in\mathcal{H}_n$ is called
\emph{close-to-convex} if $f(\mathbb{P}\Delta(0;1))$ is a close-to-convex domain.
The corresponding class is denoted by
$\mathcal{C}_{\mathcal{H}_n}.$

We further write
$\mathcal{S}^{*0}_{\mathcal{H}_n}$,
$\mathcal{K}^{0}_{\mathcal{H}_n}$ and
$\mathcal{C}^{0}_{\mathcal{H}_n}$
for the subclasses of
$\mathcal{S}^{*}_{\mathcal{H}_n}$,
$\mathcal{K}_{\mathcal{H}_n}$ and
$\mathcal{C}_{\mathcal{H}_n}$,
respectively, satisfying the normalization
$\overline{\partial}f(0)=(0,0,\ldots,0).$

\vspace{1.2mm}

The systematic study of harmonic univalent mappings was initiated by
Clunie and Sheil-Small in their pioneering paper
\cite{Clunie-Sheil-Small-1984}, where the class
$\mathcal{S}_{\mathcal{H}}$ together with several of its important geometric subclasses was introduced and investigated. Since then,
considerable attention has been devoted to these families as well as to their higher-dimensional counterparts. In particular, the classes
$\mathcal{S}^{*}_{\mathcal{H}_n}$,
$\mathcal{K}_{\mathcal{H}_n}$,
$\mathcal{C}_{\mathcal{H}_n}$,
$\mathcal{S}^{*0}_{\mathcal{H}_n}$,
$\mathcal{K}^{0}_{\mathcal{H}_n}$ and
$\mathcal{C}^{0}_{\mathcal{H}_n}$
have been investigated extensively by many authors; see
\cite{Hernandez-Martin-2013}-\cite{Liu-Yang-2019},
\cite{Muhanna-CVEE-2010},
\cite{Ponnusamy-Allu-Vuorinen-2009}-\cite{Ponnusamy-Kaliraj-Starkov-2017}.
\vspace{1.2mm}
In 1962, MacGregor \cite{MacGregor-1962} introduced the following class of
normalized analytic functions:
\begin{align*}
	\mathscr{P}:=\left\{f\in\mathcal{A}:	\Re\bigl(f'(z)\bigr)>0,\;\;	z\in\mathbb{D}
	\right\}.
\end{align*}

Motivated by this analytic family, Ponnusamy \emph{et al.} \cite{Ponnusamy-Yamamoto-Yanagihara-CVEE-2013}
proposed the following harmonic analogue:
\begin{align*}
	\mathscr{P}_{\mathcal{H}}:=\left\{
	f=h+\overline{g}\in\mathcal{H}:
	\Re\bigl(h'(z)\bigr)>|g'(z)|,\;\;z\in\mathbb{D}
	\right\},
\end{align*}
together with its normalized subclass
\begin{align*}
	\mathscr{P}_{\mathcal{H}}^{0}:=\left\{
	f=h+\overline{g}\in\mathscr{P}_{\mathcal{H}}:
	g'(0)=0	\right\}.\end{align*}

It was shown in
\cite{Ponnusamy-Yamamoto-Yanagihara-CVEE-2013}
that every mapping belonging to
$\mathscr{P}_{\mathcal{H}}$
is close-to-convex in the unit disk $\mathbb{D}$.
Moreover, a harmonic mapping $f=h+\overline{g}$
belongs to $\mathscr{P}_{\mathcal{H}}^{0}$
if and only if the analytic function
$h+\lambda g$ belongs to $\mathscr{P}$
for every unimodular constant $\lambda$
($|\lambda|=1$); see
\cite{Li-Ponnusamy-2013,Li-Ponnusamy-2013a}.
Making essential use of this characterization,
Li and Ponnusamy \cite{Li-Ponnusamy-2013}
derived coefficient estimates together with the radius of convexity for functions in $\mathscr{P}_{\mathcal{H}}^{0}$.

Inspired by the above developments, Li and Ponnusamy
\cite{Li-Ponnusamy-2013a} introduced the parameterized family
\begin{align*}
	\mathscr{P}_{\mathcal{H}}(\alpha):=
	\left\{	f=h+\overline{g}\in\mathcal{H}:
	\Re\bigl(h'(z)-\alpha\bigr)
	>|g'(z)|,\;\;z\in\mathbb{D}	\right\},
\end{align*}
where $\alpha\in[0,1)$, together with its normalized subclass
\begin{align*}
	\mathscr{P}_{\mathcal{H}}^{0}(\alpha):=
	\left\{f=h+\overline{g}\in	\mathscr{P}_{\mathcal{H}}(\alpha):g'(0)=0\right\}.
\end{align*}

It is immediate from the definitions that
$\mathscr{P}_{\mathcal{H}}(\alpha)
\subseteq \mathscr{P}_{\mathcal{H}}
\quad\text{and}\quad \mathscr{P}_{\mathcal{H}}^{0}(\alpha)
\subseteq \mathscr{P}_{\mathcal{H}}^{0}.$

Li and Ponnusamy \cite{Li-Ponnusamy-2013a}
further established that every function in
$\mathscr{P}_{\mathcal{H}}^{0}(\alpha)$
is univalent whenever $0\leq\alpha<1$.
They also obtained coefficient estimates for functions belonging to this class. For the reader's convenience, we next recall the corresponding results from
\cite{Li-Ponnusamy-2013a}.
 
 \begin{theoA}\emph{\cite[Theorem 1]{Li-Ponnusamy-2013a}}
 Let $f=h+\overline{g}\in \mathscr{P}_{\mathcal{H}}^{0}(\alpha)$ be of the form 
 \begin{align}\label{eq.1} 
 	h(z)=z+\sum_{m=2}^{\infty} a_mz^m \ \text{and} \ g(z)=\sum_{m=2}^{\infty} b_mz^m.
 \end{align} 
 Then for each $m\ge 2$,
 \begin{align*}
 |b_m|\le \frac{1-\alpha}{m}.
 \end{align*}
 
 The result is sharp, with $f(z)=z+\frac{1-\alpha}{m}\,\overline{z^{m}}$
 being extremal.
 \end{theoA}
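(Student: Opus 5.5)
We prove $|b_m|\le (1-\alpha)/m$ for $f=h+\overline{g}\in\mathscr{P}_{\mathcal{H}}^{0}(\alpha)$ by reducing the harmonic condition to a Schwarz-type subordination for the ratio of $g'$ to a Carathéodory-type function built from $h'$. The defining inequality $\Re(h'(z)-\alpha)>|g'(z)|$ shows that $p(z):=\frac{h'(z)-\alpha}{1-\alpha}$ is analytic in $\mathbb{D}$ with $p(0)=1$ and $\Re p>0$. It also shows that
\[
\omega(z):=\frac{g'(z)}{h'(z)-\alpha}
\]
is analytic in $\mathbb{D}$ (the denominator never vanishes, since its real part is positive) and satisfies $|\omega(z)|<1$. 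Moreover $\omega(0)=g'(0)/(1-\alpha)=0$ by the normalization $g'(0)=0$. Hence $\omega$ is a Schwarz function, and
\[
g'(z)=(1-\alpha)\,p(z)\,\omega(z).
\]

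Next we avoid estimating products of coefficients, which would lose sharpness, and use the unimodular rotation trick instead. For each fixed $|\lambda|=1$, the function $F_\lambda:=h+\lambda g$ satisfies
\[
\Re\bigl(F_\lambda'(z)-\alpha\bigr)=\Re\bigl(h'(z)-\alpha\bigr)+\Re\bigl(\lambda g'(z)\bigr)>|g'(z)|-|g'(z)|\ge 0 .
\]
So $q_\lambda(z):=\frac{F_\lambda'(z)-\alpha}{1-\alpha}$ is a Carathéodory function with $q_\lambda(0)=1$. By the classical Carathéodory bound, its Taylor coefficients satisfy $|m(a_m+\lambda b_m)|\le 2(1-\alpha)$ for all $m\ge 2$.

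Choosing $\lambda$ so that $\lambda b_m$ aligns with $a_m$ gives $m(|a_m|+|b_m|)\le 2(1-\alpha)$. This yields only $|b_m|\le 2(1-\alpha)/m$, which is a factor $2$ too weak. This is the main obstacle. To remove the factor $2$, we average over $\lambda$ rather than picking one value. We use the sharper Carathéodory-type fact that for $\Re q>0$ and $q(0)=1$,
\[
q(z)=\int_{|\zeta|=1}\frac{1+\zeta z}{1-\zeta z}\,d\mu(\zeta),
\]
so $c_m=2\int\zeta^m\,d\mu$. Writing $mb_m/(1-\alpha)=\frac{1}{2}\bigl(c_m(q_1)-c_m(q_{-1})\bigr)$, where $q_{\pm1}$ correspond to $\lambda=\pm1$, does not obviously give a bound of $1$ either.

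So our first attempt will instead work directly with $G:=g'/(1-\alpha)=p\,\omega$ and with the fact that, for every $\lambda$, the function $p+\lambda G$ has positive real part. This is the statement that $|G|<\Re p$. We will then apply the estimate for the coefficient of $z^{m-1}$ in $G$ obtained via the Herglotz measure of $p$. Taking the first-order family $p+\lambda G$ with measure $\mu_\lambda$, we have $\int\lambda\,d\mu_\lambda$-type linearity in $\lambda$. Averaging $\lambda\mapsto\bar\lambda\,(p+\lambda G)$ over the circle isolates $G$ and expresses it as $\frac{1}{2\pi}\int\bar\lambda\,q_\lambda\,d\theta$. Then
\[
|G_{m-1}|\le\frac{1}{2\pi}\int\tfrac{1}{2}\,|c_{m-1}(q_\lambda)|\cdot(\ldots)\,d\theta .
\]
We must be careful here, and we expect this step to need the identity $\Re(\bar\lambda\,\cdot)$ together with the constant term $1$ canceling under the averaging, which should yield the halved bound $|G_{m-1}|\le 1$, i.e.\ $m|b_m|\le 1-\alpha$.

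Finally, sharpness follows by checking that $f(z)=z+\frac{1-\alpha}{m}\,\overline{z^{m}}$ lies in the class. Here $h'=1$ and $|g'|=(1-\alpha)|z|^{m-1}<1-\alpha=\Re(h'-\alpha)$, so $f\in\mathscr{P}_{\mathcal{H}}^{0}(\alpha)$, and it attains equality in the bound.
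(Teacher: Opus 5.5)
There is a genuine gap, and it sits exactly where the factor $2$ has to disappear. The displayed estimate with the placeholder $(\ldots)$ is never completed, and the mechanism you sketch cannot complete it. With $\lambda=e^{i\theta}$ you have $\frac{1}{2\pi}\int_0^{2\pi}\bar\lambda\,c_{m-1}(q_\lambda)\,d\theta=G_{m-1}=mb_m/(1-\alpha)$ identically. The Carath\'eodory lemma says nothing about $c_{m-1}(q_\lambda)=c_{m-1}(p)+\lambda G_{m-1}$ beyond $|c_{m-1}(q_\lambda)|\le 2$, so the average again yields only $2$. In fact the whole family of these constraints over $|\lambda|=1$ is equivalent to $|c_{m-1}(p)|+|G_{m-1}|\le 2$, which permits $|G_{m-1}|=2$. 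The Herglotz version with $\lambda=\pm1$ fails for the same reason, because it only encodes $|\Re G|<\Re p$. For example, $p(z)=\frac{1+z^2}{1-z^2}$ and $G(z)=\frac{2z}{1-z^2}$ satisfy $p\pm G=\bigl(\frac{1+z}{1-z}\bigr)^{\pm1}$, hence $|\Re G|<\Re p$, yet $G_1=2$. So no bookkeeping of single-coefficient Carath\'eodory bounds can halve the estimate. The factorization $g'=(1-\alpha)p\,\omega$ also plays no role anywhere in your argument.

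The missing idea is one you already wrote down: $|G|<\Re p$ pointwise. Integrate it over a circle instead of passing to coefficients of Carath\'eodory functions. For $0<r<1$, Cauchy's coefficient formula and the mean-value property of the harmonic function $\Re p$ give
\[
r^{m-1}|G_{m-1}|=\Bigl|\frac{1}{2\pi}\int_0^{2\pi}G(re^{i\theta})e^{-i(m-1)\theta}\,d\theta\Bigr|\le\frac{1}{2\pi}\int_0^{2\pi}|G(re^{i\theta})|\,d\theta\le\frac{1}{2\pi}\int_0^{2\pi}\Re p(re^{i\theta})\,d\theta=1 .
\]
Letting $r\to1^-$ gives $m|b_m|\le 1-\alpha$. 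The factor $2$ never enters because $G_{m-1}$ is read off as a Fourier coefficient of the complex values of $G$ on $|z|=r$, whose modulus is controlled pointwise. It is not reconstructed from $\Re G$ through the Herglotz kernel. The paper only quotes Theorem A from Li and Ponnusamy, but this is exactly the argument in its proof of Theorem~\ref{Th-1.2} specialized to $n=1$, where $\binom{m}{0}=1$. Your check that $z+\frac{1-\alpha}{m}\overline{z^m}$ belongs to the class and attains equality is correct.
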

 
 \begin{theoB}\emph{\cite[Theorem 2]{Li-Ponnusamy-2013a}}
 Let $f=h+\overline{g}\in \mathscr{P}_{\mathcal{H}}^{0}(\alpha)$ and be of the form \emph{(\ref{eq.1})}. Then for any $m\ge 2$,
 	\par \emph{(i)} $|a_m|+|b_m|\le\frac{2(1-\alpha)}{m}$;
 	\par \emph{(ii)} $\bigl||a_m|-|b_m|\bigr|\le\frac{2(1-\alpha)}{m}$;
 	\par \emph{(iii)}	$|a_m|\le\frac{2(1-\alpha)}{m}$.
 	
 	All the results are sharp, with $(1-\alpha)\bigl(-z-2\log(1-z)\bigr)+\alpha z$
 	being the extremal.
 \end{theoB}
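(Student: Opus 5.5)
The plan is to reduce Theorem B to the one-variable analytic class via the rotation characterization. Recall from \cite{Li-Ponnusamy-2013,Li-Ponnusamy-2013a} that $f=h+\overline{g}$ belongs to $\mathscr{P}_{\mathcal{H}}^{0}(\alpha)$ if and only if $F_\lambda=h+\lambda g$ satisfies $\Re\bigl(F_\lambda'(z)\bigr)>\alpha$ on $\mathbb{D}$ for every $|\lambda|=1$. Indeed, $\Re(h'+\lambda g')\ge \Re h'-|g'|>\alpha$, and conversely choosing $\lambda$ with $\lambda g'=-|g'|$ pointwise recovers the defining inequality.

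Then $p_\lambda(z)=\bigl(F_\lambda'(z)-\alpha\bigr)/(1-\alpha)$ is a Carathéodory function, $p_\lambda(0)=1$ and $\Re p_\lambda>0$, since $g'(0)=0$ and $h'(0)=1$. Its Taylor coefficients are
\[
\frac{m(a_m+\lambda b_m)}{1-\alpha}, \qquad m\ge 2.
\]
Carathéodory's lemma $|c_k|\le 2$ then gives
\[
|a_m+\lambda b_m|\le \frac{2(1-\alpha)}{m}\quad\text{for all } |\lambda|=1.
\]

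All three parts follow from this single inequality by choosing $\lambda$.
\begin{enumerate}
\item[(i)] Pick $\lambda$ so that $\lambda b_m$ has the same argument as $a_m$. This gives $|a_m|+|b_m|\le 2(1-\alpha)/m$.
\item[(ii)] This follows from (i), since $\bigl||a_m|-|b_m|\bigr|\le |a_m|+|b_m|$.
\item[(iii)] This is also immediate from (i).
\end{enumerate}
As a consistency check, averaging over $\lambda$ should reproduce Theorem A. The sharper bound $|b_m|\le(1-\alpha)/m$ there needs the extra observation that $g'=\omega h'$-type arguments, or $\tfrac12$ of the difference $F_{\lambda}-F_{-\lambda}$, are controlled. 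That refinement is not required here.

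For sharpness, take $h(z)=(1-\alpha)\bigl(-z-2\log(1-z)\bigr)+\alpha z$ and $g\equiv 0$. Then
\[
h'(z)=\alpha+(1-\alpha)\frac{1+z}{1-z},
\]
so $\Re h'>\alpha$ and $f\in\mathscr{P}_{\mathcal{H}}^{0}(\alpha)$. Its coefficients are $a_m=2(1-\alpha)/m$ with $b_m=0$, so equality holds in (i), (ii) and (iii) simultaneously.

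The only step needing care is the equivalence with the family $\{F_\lambda\}$, in particular the direction requiring a pointwise choice of $\lambda$. We only use the easy direction: membership implies $\Re F_\lambda'>\alpha$ for every fixed $\lambda$. So no real obstacle is expected beyond verifying the normalization $p_\lambda(0)=1$, which uses $g'(0)=0$ crucially.
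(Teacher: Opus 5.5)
Your proposal is correct and follows the paper's route: the paper's proof of its $n$-variable version, Theorem \ref{Th-1.3}, reduces at $n=1$ to exactly your argument. It forms $F_\varepsilon=h+\varepsilon g$, normalizes $F_\varepsilon'$ to a Carath\'eodory function to get $|a_m+\varepsilon b_m|\le 2(1-\alpha)/m$ for every unimodular $\varepsilon$, rotates $\varepsilon$ to obtain (i)--(iii), and uses the same extremal function ($f_2$ with $n=1$). The only differences are that you cite Carath\'eodory's bound $|c_k|\le 2$ directly where the paper re-derives it via a roots-of-unity average and a Schwarz-type function, and that your side remark that averaging over $\lambda$ recovers Theorem A is inaccurate: that bound needs the separate integral-mean comparison of $|g'|$ with $\Re(h'-\alpha)$ from Theorem \ref{Th-1.2}, though your proof does not depend on it.
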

 
In $2021$, Allu and  Halder \cite{Allu-Halder-2021} 
 proved the following sharp growth estimate for the class $\mathscr{P}_{\mathcal{H}}^{0}(\alpha)$.
 \begin{theoC}\emph{\cite[Theorem 2.1]{Allu-Halder-2021}}
 	Let $f=h+\overline{g}\in\mathscr{P}_{\mathcal{H}}^{0}(\alpha)$ with $0\le\alpha<1$. Then
 	\begin{align*}
 	|z|
 	+\sum_{m=2}^{\infty}
 	\frac{2(1-\alpha)(-1)^{m-1}}{m}|z|^{m}
 	\le
 	|f(z)|
 	\le
 	|z|
 	+\sum_{m=2}^{\infty}
 	\frac{2(1-\alpha)}{m}|z|^{m}.
 	\end{align*}
 	Both the inequalities are sharp.
 \end{theoC}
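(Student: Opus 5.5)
We prove Theorem C, the growth estimate for $f=h+\overline{g}\in\mathscr{P}_{\mathcal{H}}^{0}(\alpha)$, by integrating $h'$ and $g'$ along a radial segment and using the coefficient bounds of Theorem B.

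\textbf{Upper bound.} Fix $z\in\mathbb D$ with $|z|=r$. Since $f(0)=0$,
\[
|f(z)|\le |h(z)|+|g(z)|\le r+\sum_{m=2}^{\infty}\bigl(|a_m|+|b_m|\bigr)r^m .
\]
Theorem B(i) gives $|a_m|+|b_m|\le 2(1-\alpha)/m$, which yields the right-hand inequality at once.

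\textbf{Lower bound: reduction to one variable.} Here we do not work with coefficients directly; we use the characterization recalled before Theorem A. Write $F_\lambda=h+\lambda g$ with $|\lambda|=1$. Then $\Re\bigl(F_\lambda'(z)-\alpha\bigr)\ge \Re(h'(z)-\alpha)-|g'(z)|>0$. Hence $(F_\lambda'-\alpha)/(1-\alpha)$ is a Carath\'eodory function with value $1$ at the origin, because $g'(0)=0$ and $h'(0)=1$. Its Herglotz representation gives the standard lower bound
\[
\Re F_\lambda'(\zeta)\ge \alpha+(1-\alpha)\frac{1-|\zeta|}{1+|\zeta|}.
\]

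\textbf{Lower bound: the integration.} Let $\Gamma$ be the preimage under $f$ of the radial segment from $0$ to $f(z)$. Univalence of $f$ (Li--Ponnusamy) ensures $\Gamma$ is a curve from $0$ to $z$ along which $\arg f$ is constant. Along $\Gamma$ we have $|df|=|h'd\zeta+\overline{g'd\zeta}|\ge (|h'|-|g'|)|d\zeta|$. Moreover $|h'|-|g'|\ge \Re h'-|g'|$, and this equals $\min_{|\lambda|=1}\Re F_\lambda'$ pointwise. So
\[
|f(z)|=\int_\Gamma|df|\ge\int_0^{r}\Bigl(\alpha+(1-\alpha)\frac{1-t}{1+t}\Bigr)dt .
\]
Here we used that $|\zeta|$ increases from $0$ to at least $r$ along $\Gamma$ and that the integrand is positive and decreasing. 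The integral equals $r+2(1-\alpha)\bigl(\log(1+r)-r\bigr)$. Expanding $\log(1+r)$ gives $r+\sum_{m\ge2}2(1-\alpha)(-1)^{m-1}r^m/m$, as claimed.

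\textbf{Sharpness.} The extremal function of Theorem B,
\[
f_0(z)=(1-\alpha)\bigl(-z-2\log(1-z)\bigr)+\alpha z,
\]
has $g\equiv0$ and $f_0'(z)=\alpha+(1-\alpha)(1+z)/(1-z)$. At $z=r$ it attains the upper bound. At $z=-r$ its coefficients alternate in sign and it attains the lower bound.

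The main obstacle is justifying the curve argument in the lower bound. This needs univalence and care that $|\zeta|$ is not monotone along $\Gamma$. We handle it by the standard trick of bounding the integrand below by its value at $|\zeta|$ and parametrizing by arclength, whose length exceeds $r$.
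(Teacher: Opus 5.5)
You have a genuine gap in the lower bound. Your upper bound via Theorem B(i), the sharpness discussion, and the pointwise estimate $|h'(\zeta)|-|g'(\zeta)|\ge \Re h'(\zeta)-|g'(\zeta)|=\min_{|\lambda|=1}\Re F_\lambda'(\zeta)\ge\phi(|\zeta|)$, with $\phi(t)=\alpha+(1-\alpha)\frac{1-t}{1+t}$, are all correct.

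The gap is the claim that univalence ensures $\Gamma=f^{-1}([0,f(z)])$ is a curve from $0$ to $z$. Univalence makes $f$ a homeomorphism onto $f(\mathbb D)$, but it does not place the segment $[0,f(z)]$ inside $f(\mathbb D)$. For that you would need $f(\mathbb D)$ to be starlike with respect to $0$, and nothing you invoke (univalence, close-to-convexity) gives this. So for an arbitrary $z$ with $|z|=r$ the preimage need not exist at all. The obstacle you single out at the end, the non-monotonicity of $|\zeta|$ along $\Gamma$, is not the real one. Your arclength device already handles it: $|\zeta(s)|\le s$ gives $\phi(|\zeta(s)|)\ge\phi(s)$ for $s\le r$, and $\phi>0$ on the rest of $\Gamma$.

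The standard repair is to run the argument only at a point $z_0$ with $|z_0|=r$ and $|f(z_0)|=\min_{|\zeta|=r}|f(\zeta)|$. Since $f$ is continuous and injective on $\{|\zeta|\le r\}$, the set $f(\{|\zeta|<r\})$ is the interior of the Jordan curve $f(\{|\zeta|=r\})$ and contains $0$. Hence the disc $\{|w|<|f(z_0)|\}$ lies in it, and $\Gamma=f^{-1}([0,f(z_0)])$ is a genuine arc from $0$ to $z_0$. It is smooth because the Jacobian $|h'|^2-|g'|^2$ is positive. Your estimate then gives $|f(z)|\ge|f(z_0)|\ge\int_0^r\phi(t)\,dt$ for every $|z|=r$.

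The paper only cites Theorem C, but its proof of the $n$-dimensional version, Theorem~\ref{Th-1.4}, read at $n=1$, avoids univalence and preimage curves entirely. It works with the analytic functions $F_\varepsilon=h+\varepsilon g$, $|\varepsilon|=1$. Write $F_\varepsilon'=\alpha+(1-\alpha)\frac{1+\omega}{1-\omega}$ with $|\omega(\zeta)|\le|\zeta|$, and integrate along the straight radius $\zeta=te^{i\theta}$, $0\le t\le r$. This gives $\int_0^r\phi(t)\,dt\le\Re\bigl(e^{-i\theta}F_\varepsilon(z)\bigr)\le|F_\varepsilon(z)|\le\int_0^r\bigl(\alpha+(1-\alpha)\frac{1+t}{1-t}\bigr)\,dt$.

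The paper compresses the transfer to $f$ into ``since $\varepsilon$ is arbitrary''. It is really the elementary chain $\bigl||h|-|g|\bigr|=\min_{|\varepsilon|=1}|h+\varepsilon g|\le|h+\overline g|\le|h|+|g|=\max_{|\varepsilon|=1}|h+\varepsilon g|$.

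That route is shorter, gives both bounds at once, and needs neither Theorem B nor the univalence theorem. Your route, once repaired, is the classical argument for sense-preserving univalent harmonic maps through a lower bound on $|h'|-|g'|$. That is the tool you need when no analytic family like $F_\varepsilon$ is available.
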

 
\subsection{\bf{Bohr phenomenon for the complex-valued pluriharmonic mappings}}
For a given power series of the form \eqref{In-1.1}, its majorant series is defined by
\begin{align*}
	M_f(r)=\sum\limits_{m=0}^{\infty}\sum\limits_{|\alpha|=m} |a_{\alpha}z^{\alpha}|=\sum\limits_{m=0}^{\infty}\sum\limits_{|\alpha|=m} |a_{\alpha}|r^{\alpha},
\end{align*}
where $r=(|z_1|,|z_2|,\ldots,|z_n|)=(r_1,r_2,\ldots,r_n)$ such that $r_j<1$ for $j=1,2,\ldots,n$. Clearly, for each $f\in\mathcal{B}(\mathbb{D})$, we write $M(r):=M_f(r)$ which is an increasing function of $r$ for $0\leq r<1$. Moreover, $M_f(0)=|a_0|=|f(0)|\leq 1$. It is worth noting that, for certain functions $f\in\mathcal{B}(\mathbb{D})$, the majorant series $M_f(r)$ exceeds $1$ (see \cite{Boas-2000}). This naturally leads to the following question: for which values of $r\in[0,1)$ does the inequality
\begin{align}\label{In-1.3}
	M_f(r)=\sum\limits_{m=0}^{\infty}|a_m|r^m\leq 1
\end{align}
hold for every $f\in\mathcal{B}(\mathbb{D})$? The inequality \eqref{In-1.3} on $[0,1)$ is
usually known as Bohr inequality for the class $\mathcal{B}(\mathbb{D})$. 

In $1914$, Harald Bohr \cite{Bohr-1914} observed that the inequality \eqref{In-1.3} is true for $|z|\leq 1/6$ and this practice was further expanded by Wiener, Riesz, and Schur who
independently established the inequality \eqref{In-1.3} on the disk $|z|=r\leq 1/3$. Proofs
have also been given by Sidon \cite{Sidon-1927} and Tomi\'c \cite{Tomic-1962} and it is described
precisely as follows:

\begin{theoD}If $f\in \mathcal{B}(\mathbb{D})$, then $M_f(r)\leq 1$ for $0\leq r\leq 1/3$. The number $1/3$ is best possible.
\end{theoD}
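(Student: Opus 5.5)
Theorem D is the classical Bohr theorem, and the plan is to prove it by the Wiener coefficient estimate. The whole argument reduces to the sharp bound
\[
|a_m|\le 1-|a_0|^2,\qquad m\ge 1,
\]
valid for every $f(z)=\sum_{m\ge0}a_mz^m\in\mathcal{B}(\mathbb{D})$.

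\textbf{Step 1: the coefficient bound.} If $|a_0|=1$, then $f$ is constant by the maximum principle, and there is nothing to prove. Otherwise we may assume $|f|<1$ in $\mathbb{D}$. Fix $m\ge1$ and let $\omega=e^{2\pi i/m}$. Form the average
\[
F(z)=\frac1m\sum_{k=0}^{m-1}f(\omega^k z)=a_0+a_mz^m+a_{2m}z^{2m}+\cdots,
\]
which still maps $\mathbb{D}$ into $\mathbb{D}$. Writing $F(z)=G(z^m)$, the function $G(w)=a_0+a_mw+\cdots$ is a self-map of $\mathbb{D}$. The Schwarz--Pick lemma at the origin then gives $|G'(0)|\le 1-|G(0)|^2$, which is exactly $|a_m|\le1-|a_0|^2$.

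\textbf{Step 2: summing the majorant.} Put $a=|a_0|\in[0,1)$. For $0\le r\le1/3$ we get
\[
M_f(r)\le a+(1-a^2)\frac{r}{1-r}\le a+\frac{1-a^2}{2}.
\]
The right-hand side equals $1-\tfrac12(1-a)^2\le1$, so $M_f(r)\le 1$ on $[0,1/3]$.

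\textbf{Step 3: sharpness.} I would use the disc automorphisms
\[
\varphi_a(z)=\frac{a-z}{1-az},\qquad a\in(0,1).
\]
Expanding, $\varphi_a(z)=a-(1-a^2)\sum_{m\ge1}a^{m-1}z^m$, so
\[
M_{\varphi_a}(r)=a+\frac{(1-a^2)r}{1-ar}.
\]
This exceeds $1$ exactly when $(1+a)r>1-ar$, i.e. when $r>\frac{1}{1+2a}$. Letting $a\to1^-$ shows that for every $r>1/3$ some $f\in\mathcal{B}(\mathbb{D})$ has $M_f(r)>1$, so the constant $1/3$ cannot be improved.

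The only genuinely nontrivial step is Step 1. The rotation-averaging trick is what reduces the estimate for an arbitrary coefficient $a_m$ to the Schwarz--Pick bound on the first derivative. The remaining steps are elementary algebra.
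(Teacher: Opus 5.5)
Your proof is correct and complete. The paper does not prove the inequality itself; it only cites the classical proofs (Bohr, Wiener, Riesz, Schur, Sidon, Tomi\'c), and your route is Wiener's: the bound $|a_m|\le 1-|a_0|^2$ from rotation-averaging and Schwarz--Pick, followed by summation. Your sharpness step uses the same M\"obius maps $\varphi_a(z)=(a-z)/(1-az)$ with $a\to 1^-$ that the paper points to, and you make its remark explicit by showing $M_{\varphi_a}(r)>1$ exactly when $r>1/(1+2a)$.
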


The inequality $M_f(r)\leq 1$ for $f\in \mathcal{B}(\mathbb{D})$ does not hold for any $r>1/3$. This can be verified by considering the M\"{o}bius transformation
\begin{align*}
	\phi_a(z)=\frac{a-z}{1-az}
\end{align*}
where $a\in (0,1)$ is chosen sufficiently close to $1$. 

We note that the inequality (\ref{In-1.3}) equivalent to
\begin{align}\label{In-1.4}
	\sum\limits_{m=1}^{\infty}|a_m|r^m\leq 1-|a_0|=d\left(f(0),\partial f(\mathbb{D})\right)\quad \mbox{for}\quad |z|\leq \frac{1}{3},
\end{align}
where $d\left(f(0),\partial f(\mathbb{D})\right)$  denotes the Euclidean distance between $f(0)$ and the boundary of $f(\mathbb{D})$. Functions $f\in \mathcal{B}(\mathbb{D})$ satisfying (\ref{In-1.4}), sometimes are said to satisfy the classical Bohr’s phenomenon. It is important to note that the existence of the radius $1/3$ in (\ref{In-1.4}) is independent of the coefficients of the power series $f(z)=\sum_{m=0}^{\infty} a_mz^m$,  i.e., in a better way we can demonstrate this fact by saying that a Bohr phenomenon appears in the class of analytic self-maps of the unit disk $\mathbb{D}$.\vspace{1.2mm}

The theory of Bohr inequalities has become an important and active area of research in complex analysis, geometric function theory, and operator theory.  Since the discovery of Theorem D, Bohr's theorem has inspired extensive research leading to numerous generalizations, refinements, and applications in one and several complex variables.

For background information about this inequality and further work related to Bohr's inequality, we refer the reader to the recent survey by Abu-Muhanna et al. \cite{Abu-Muhanna-Ali-Ponnusamy-2016}
and the references therein.\vspace{1.2mm}

Renewed interest in Bohr's phenomenon surged in the 1990s, driven by successful extensions to holomorphic functions of several complex variables and more abstract functional-analytic settings. Notably, in 1997, Boas and Khavinson \cite{Boas-Khavinson-PAMS-1997} introduced and determined the $n$-dimensional Bohr radius for the family of bounded holomorphic functions on the unit polydisk. This seminal work stimulated a resurgence of research interest in Bohr-type questions across diverse mathematical domains. Subsequent investigations have yielded profound insights into the Bohr phenomenon for multidimensional power series, with notable contributions by Aizenberg et al. \cite{Aizenberg-PAMC-1999}-\cite{Aizenberg},
 Defant and Frerick \cite{Defant-Frerick-IJM-2006}, and Djakov and Ramanujan \cite{Djakov-Ramanujan-JA-2000}.
The Bohr radius has been studied extensively across various classes of functions, including locally univalent harmonic mappings, $k$-quasiconformal mappings, bounded harmonic functions, and lacunary series (see, for instance, \cite{Kayumov-Ponnusamy-Shakirov-2018, Kayumov-Ponnusamy-2018}). For a deeper exploration of the intriguing aspects and recent developments in the Bohr phenomenon for the complex-valued harmonic and pluriharmonic mappings, we refer the reader to \cite{Chen-Hamada-JFA-2022,Hamada-Honda-Mizota-2020,Liu-Ponnusamy-2023,Liu- Ponnusamy-Wang-2020,Liu-Ponnusamy-2019,Muhanna-CVEE-2010,Muhanna-Ali-Ng-Hansi-2014} and the references therein.

The absolute value of a complex number $z_1$ is denoted by $|z_1|$ and for $z=(z_1,\ldots,z_n)\in\mathbb{C}^n$, we define
\begin{align*}
	||z||^2=\sum\limits_{k=1}^n|z_k|^2\quad \text{and}\quad ||z||_{\infty}=\max\limits_{1\leq i\leq n}|z_i|.
\end{align*}

The Bohr phenomenon for pluriharmonic functions $f=h+\ol g\in \mathcal{H}_n^0$ of the form \eqref{Eq 1.7} is as follows: find the largest radius ${\bf{r}_f}\in (0,1)$ such that the following inequality
\begin{align}\label{Eq 1.8} 
M_f(n{\bf{r}}):=&\left|\sum\limits_{j=1}^n z_j\right|+\sum\limits_{m=2}^{\infty}\sum\limits_{|\beta|=m}\left(\left|\sum\limits_{|\beta|=m}a_{\beta}\right|+\left|\sum\limits_{|\beta|=m}b_{\beta}\right|\right)r^{\beta}\\\leq& d\left(f(0),\partial f\left(\mathbb{P} \Delta(0;1)\right)\right)\nonumber
\end{align}
holds for all $z=(z_1,z_2,\ldots,z_n)\in \mathbb{P} \Delta(0;1)$ such that $\|z\|_{\infty}={\bf{r}}\leq {\bf{r}_f}$, where $r=(r_1,r_2,\ldots,r_n)$ and for all $f\in\mathcal{H}_n^0$, where $M_f(n{\bf{r}})$ is the Majorant series of $f$. Such radius ${\bf{r}_f}$ is called the Bohr radius for the class $\mathcal{H}_n^0$. 

Using Theorems B and C, Allu and  Halder \cite{Allu-Halder-2021} obtained the Bohr radius for the class $\mathscr{P}_{\mathcal{H}}^{0}(\alpha)$.
\begin{theoE}\emph{\cite[Theorem 2.2]{Allu-Halder-2021}}
	Let $f\in\mathcal{P}_{H}^{0}(\alpha)$ be given by \emph{(\ref{eq.1})} with
	$0\le \alpha<1$. Then the inequality
\begin{align*}
M_f(r):=|z|+\sum_{n=2}^{\infty}\left(|a_n|+|b_n|\right)|z|^n
\leq d\bigl(f(\mathbf{0}),\partial f(\mathbb{D})\bigr)
\end{align*}
holds for $|z|=r\le r_f$, where $r_f$ is the unique positive root of
\begin{align*}
	r+2(1-\alpha)\sum_{m=2}^{\infty}\frac{r^{m}}{m}
	=
	1+2(1-\alpha)\sum_{m=2}^{\infty}\frac{(-1)^{m-1}}{m}
\end{align*}
in $(0,1)$. The radius $r_f$ is the best possible.
\end{theoE}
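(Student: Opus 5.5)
The approach has three parts: sharp coefficient bounds from Theorem B, a lower bound for the boundary distance from Theorem C, and comparison of the two as monotone functions of $r$. The natural function to study is
\[
F(r):=r+2(1-\alpha)\sum_{m=2}^{\infty}\frac{r^{m}}{m},
\]
which is $|z|$ plus the majorant of the coefficient bounds.

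\textbf{Step 1 (majorant bound).} By Theorem B(i), $|a_m|+|b_m|\le \frac{2(1-\alpha)}{m}$ for every $m\ge2$. Hence
\[
M_f(r)\le F(r).
\]
The function $F$ is continuous and strictly increasing on $[0,1)$, with $F(0)=0$ and $F(r)\to\infty$ as $r\to1^-$, since the series $\sum r^m/m$ diverges at $r=1$.

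\textbf{Step 2 (distance bound).} Every $f\in\mathscr{P}_{\mathcal H}^0(\alpha)$ is univalent (Li--Ponnusamy) and $f(0)=0$. Therefore
\[
d\bigl(f(0),\partial f(\mathbb D)\bigr)=\liminf_{|z|\to1}|f(z)|.
\]
By the lower estimate in Theorem C, letting $|z|\to1^-$ gives
\[
d\bigl(f(0),\partial f(\mathbb D)\bigr)\ge 1+2(1-\alpha)\sum_{m\ge2}\frac{(-1)^{m-1}}{m}=:c_\alpha .
\]
The alternating series converges to $1-\log 2$, so $c_\alpha=1+2(1-\alpha)(1-\log 2)-2(1-\alpha)\cdot 1\cdot 0$; more simply, $c_\alpha=2(1-\alpha)(1-\log2)-(1-\alpha)+1>0$. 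The limit passage at $|z|\to1$ should be justified by Abel's theorem.

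\textbf{Step 3 (radius).} Because $F$ is strictly increasing from $0$ to $\infty$, the equation $F(r)=c_\alpha$ has a unique root $r_f\in(0,1)$. For $r\le r_f$ we then get
\[
M_f(r)\le F(r)\le F(r_f)=c_\alpha\le d\bigl(f(0),\partial f(\mathbb D)\bigr).
\]

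\textbf{Step 4 (sharpness).} Take the extremal function of Theorem B,
\[
f_0(z)=(1-\alpha)\bigl(-z-2\log(1-z)\bigr)+\alpha z,
\]
which is analytic, so $g\equiv0$. Its coefficients are $a_m=2(1-\alpha)/m$, so $M_{f_0}(r)=F(r)$ exactly.

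Next compute the boundary distance of $f_0$. On the negative real axis, letting $z\to-1$ gives $|f_0(-1)|=c_\alpha$. It must also be shown that this is the closest boundary point, i.e.\ that $d(0,\partial f_0(\mathbb D))=c_\alpha$. This follows from Theorem C's lower bound, which forces $d\ge c_\alpha$, combined with the explicit boundary value at $-1$, which gives $d\le c_\alpha$.

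Consequently, for $r>r_f$ we have $M_{f_0}(r)=F(r)>c_\alpha=d$, so $r_f$ cannot be enlarged.

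\textbf{Main obstacle.} The delicate step is identifying $d(f(0),\partial f(\mathbb D))$ with $\liminf_{|z|\to1}|f(z)|$. This needs univalence, so that $f(\mathbb D)$ is a domain whose boundary is exactly the cluster set of $f$ at the unit circle. It also needs care with the boundary behaviour of the alternating series at $r=1$.
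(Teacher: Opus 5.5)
Your proof is correct and is essentially the argument the paper attributes to Allu and Halder (``using Theorems B and C''). The paper carries out the same steps in dimension $n$ in the proof of Theorem~\ref{Th-1.5}: Theorem B(i) bounds $M_f(r)$ by $F(r)$, and the lower estimate of Theorem C with a $\liminf$ as $|z|\to1$ bounds $d\bigl(f(0),\partial f(\mathbb{D})\bigr)$ from below. Strict monotonicity of $F$ then gives the unique root, and the extremal function $f_0$ gives sharpness.

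One arithmetic slip needs fixing: $\sum_{m\ge2}(-1)^{m-1}/m=\log 2-1$, not $1-\log 2$. Hence $c_\alpha=1-2(1-\alpha)(1-\log 2)\ge 2\log 2-1>0$, and both closed forms you wrote for $c_\alpha$ are incorrect. With the correct value, $f_0(-1)=-1+2(1-\alpha)(1-\log 2)=-c_\alpha$, which is exactly what your sharpness step needs.
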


\subsection{\bf {Our aim}}	
At first, we now introduce the class $\mathscr{P}_{\mathcal{\mathcal{H}}_n^0}(\alpha)$ in $\mathbb{P} \Delta(0;1)$ as follows.
\begin{defi} For $\alpha$ with $0\leq \alpha<1$ and $z\in \mathbb{P} \Delta(0;1)$, let
	\begin{align*}
		\mathscr{P}_{\mathcal{H}_n^0}(\alpha)=\left\lbrace h+\ol g\in \mathcal{H}_n^0:\sum\limits_{k=1}^{\infty} \Re\left(\frac{\partial h(z)}{\partial z_k}-\alpha\right)>\sum\limits_{k=1}^{\infty}\left|\frac{\partial g(z)}{\partial z_k}\right| \right\rbrace.
	\end{align*}
\end{defi}

When $\dim(\mathbb{C}^n)=1$, we denote $\mathscr{P}_{\mathcal{H}_1^0}(\alpha)$ by $\mathscr{P}_{\mathcal{H}}^0(\alpha)$. Also we introduce the class $\mathscr{P}_{\mathcal{\mathcal{H}}_n^0}$ in $\mathbb{P} \Delta(0;1)$ as follows.
	\begin{align*}
		\mathscr{P}_{\mathcal{H}_n^0}=\left\lbrace h+\ol g\in \mathcal{H}_n^0:\sum\limits_{k=1}^{\infty} \Re\left(\frac{\partial h(z)}{\partial z_k}\right)>\sum\limits_{k=1}^{\infty}\left|\frac{\partial g(z)}{\partial z_k}\right| \right\rbrace.
	\end{align*}
\vspace{1.2mm}

It is natural to raise the following open problem.
\begin{prob}\label{P-1}
	Can we establish the multidimensional versions of Theorems A, B, C and E?
\end{prob}

The primary goal of this article is to provide an affirmative answer to Problem \ref{P-1}. 
The novelty of this research lies in the extension of the theory of \emph{harmonic} functions in $\mathbb{C}$ to \emph{pluriharmonic} functions in higher-dimensional complex spaces ($\mathbb{C}^n$). In this paper, we study the analytic and geometric properties for the class $\mathscr{P}_{\mathcal{H}_n^0}(\alpha)$.
The sharp coefficient bounds and growth estimates provided for the class $\mathscr{P}_{\mathcal{H}_n^0}(\alpha)$ represent a major advancement in the study of geometric function theory for several complex variables. Here we also obtain the Bohr radius for the class $\mathscr{P}_{\mathcal{H}_n^0}(\alpha)$ satisfying the inequality \eqref{Eq 1.8}. \vspace{1.2mm}

The paper is organized as follows: In Section \ref{Sec-2}, we obtain the bounds on the Taylor coefficients of $h$ and $g$ for the function $f=h+\ol g\in\mathscr{P}_{\mathcal{H}_n^0}(\alpha)$. In Section \ref{Sec-3}, we provide the Bohr radius for the class $\mathscr{P}_{\mathcal{H}_n^0}(\alpha)$ satisfying the inequality \eqref{Eq 1.8}. Also in Section \ref{Sec-4}, we study the partial sums of functions in $\mathscr{P}_{\mathcal{H}_n^0}(\alpha)$.

\section{{\bf Bounds on the Taylor coefficients of $h$ and $g$ for the function $f=h+\ol g\in\mathscr{P}_{\mathcal{H}_n^0}(\alpha)$}}\label{Sec-2}
The following result provides the sharp coefficient bounds for functions in $\mathscr{P}_{\mathcal{H}_n^0}(\alpha)$.

\begin{theo}\label{Th-1.2} Let $f=h+\ol g\in \mathscr{P}_{\mathcal{H}_n^0}(\alpha)$ and be given by \emph{(\ref{Eq 1.7})}. Then for any multi-index $\beta=(\beta_1,\beta_2,\ldots,\beta_n)$ such that $|\beta|=m\geq 2$, we have
\begin{align*}
\sum\limits_{|\beta|=m}|b_{\beta}|\leq n\binom{m+n-1}{n-1}\frac{1-\alpha}{m}
\end{align*}
The inequality is sharp.
\end{theo}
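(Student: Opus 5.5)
The plan is to reduce Theorem \ref{Th-1.2} to a mean-value (Cauchy-integral) estimate on the distinguished boundary of small polydiscs, applied to the two holomorphic functions
\[
H(z)=\sum_{k=1}^{n}\frac{\partial h(z)}{\partial z_k},\qquad G(z)=\sum_{k=1}^{n}\frac{\partial g(z)}{\partial z_k}.
\]
(The sums in the definition of $\mathscr{P}_{\mathcal{H}_n^0}(\alpha)$ are read as running over $k=1,\ldots,n$.) By \eqref{Eq 1.7} we have $H(0)=n$ and $G(0)=0$, and the defining inequality says $|G(z)|<\Re H(z)-n\alpha$ on $\mathbb{P}\Delta(0;1)$. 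For a multi-index $\gamma$ with $|\gamma|=m-1\ge 1$, the Taylor coefficient of $z^\gamma$ in $G$ is
\[
c_\gamma(G)=\sum_{k=1}^{n}(\gamma_k+1)\,b_{\gamma+e_k}.
\]

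\textbf{Step 1: bound the coefficients of $G$.} Fix $0<\rho<1$ and write $w=(\rho e^{i\theta_1},\ldots,\rho e^{i\theta_n})$ with $d\sigma$ the normalized Haar measure on this torus. Then $c_\gamma(G)\rho^{|\gamma|}=\int G(w)\,\overline{e^{i\gamma\cdot\theta}}\,d\sigma$, so
\[
|c_\gamma(G)|\rho^{|\gamma|}\le\int|G|\,d\sigma\le\int\bigl(\Re H-n\alpha\bigr)d\sigma=\Re H(0)-n\alpha=n(1-\alpha).
\]
The middle equality is the mean-value property of the pluriharmonic function $\Re H$. Letting $\rho\to1$ gives $|c_\gamma(G)|\le n(1-\alpha)$. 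The same argument applied to $H+\lambda G$ with $|\lambda|=1$, which has real part exceeding $n\alpha$, gives in addition the combined bound $|c_\gamma(H)|+|c_\gamma(G)|\le 2n(1-\alpha)$. This will be useful later for the analogue of Theorem B.

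\textbf{Step 2: pass from $c_\gamma(G)$ to the individual $b_\beta$.} This is the step I expect to be the main obstacle. The identity for $c_\gamma(G)$ mixes the $b_\beta$ with $\beta=\gamma+e_k$, and the positive weights $\gamma_k+1$ do not prevent cancellation. I would first try to show $|b_\beta|\le n(1-\alpha)/m$ for each $\beta$ with $|\beta|=m$. The route would be the one-variable slice $\zeta\mapsto f(\zeta 1_n)/n$, whose derivative structure is exactly $H(\zeta 1_n)$, $G(\zeta 1_n)$; by Theorem A this slice lies in $\mathscr{P}^0_{\mathcal{H}}(\alpha)$. This immediately gives
\[
\Bigl|\sum_{|\beta|=m}b_\beta\Bigr|\le\frac{n(1-\alpha)}{m}.
\]
To isolate a single $b_\beta$, I would combine this with the torus averaging of Step 1, together with the Euler identity $\sum_k z_k\,\partial g/\partial z_k=\sum_m m\sum_{|\beta|=m}b_\beta z^\beta$. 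If the pointwise bound $|b_\beta|\le n(1-\alpha)/m$ is obtained, summing over the $\binom{m+n-1}{n-1}$ multi-indices of degree $m$ yields the stated inequality. If only the bound $|c_\gamma(G)|\le n(1-\alpha)$ can be secured, I would instead sum that bound over $\gamma$ and compare the weights, accepting the factor $\binom{m+n-1}{n-1}$ as the price of the comparison.

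\textbf{Step 3: sharpness.} I would test the mapping
\[
f(z)=\sum_{j=1}^n z_j+\overline{\tfrac{n(1-\alpha)}{m}\,z_1^m}.
\]
Here $|G|=n(1-\alpha)|z_1|^{m-1}<n(1-\alpha)=\Re H-n\alpha$, so $f$ belongs to the class. I would then check, presumably via an $n=1$ reduction to the extremal function of Theorem A, in what sense the bound is attained. I expect the sharpness claim to need care, since this example saturates only the per-coefficient bound and not the binomial-weighted sum.
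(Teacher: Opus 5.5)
The inequality is not proved: there is a genuine gap in Step~2, and it originates in Step~1. You replace the hypothesis $\sum_k|\partial g/\partial z_k|<\sum_k\Re(\partial h/\partial z_k-\alpha)$ by the weaker inequality $|G|<\Re H-n\alpha$. That throws away exactly the information the theorem needs, and no torus averaging, Euler identity or diagonal slicing can recover it.

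For $n\ge 2$ the map $Q\mapsto\sum_k\partial Q/\partial z_k$ has a large kernel on homogeneous polynomials of degree $m$. For example, take $h=\sum_j z_j$ and $g=\lambda(z_1-z_2)^m$. Then $G\equiv 0$ and $g(\zeta 1_n)\equiv 0$, so this pair satisfies $|G|<\Re H-n\alpha$ and every bound you derive from $G$ or from the slice. Yet $\sum_{|\beta|=m}|b_\beta|=2^m|\lambda|$ is unbounded. Hence neither $|c_\gamma(G)|\le n(1-\alpha)$ nor $|\sum_\beta b_\beta|\le n(1-\alpha)/m$ can yield the statement, and the fallback of summing over $\gamma$ and comparing weights cannot work.

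The missing idea is to apply the Cauchy estimate to each $\partial g/\partial z_k$ separately. There the coefficient of $z^{\beta-e_k}$ is the single number $\beta_k b_\beta$, so there is no mixing:
\[
\beta_k|b_\beta|\rho^{m-1}\le\int\Bigl|\frac{\partial g}{\partial z_k}\Bigr|\,d\sigma .
\]
Sum over $k$ using $\sum_k\beta_k=m$, then use the full hypothesis and the mean-value property of $\Re H$. This gives $m|b_\beta|\rho^{m-1}\le\int\sum_k|\partial g/\partial z_k|\,d\sigma\le n(1-\alpha)$. That is exactly your hoped-for bound $|b_\beta|\le n(1-\alpha)/m$, and counting the $\binom{m+n-1}{n-1}$ multi-indices gives the inequality. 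The paper's proof is this computation, except that it sums over $\beta$ before summing over $k$.

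On sharpness, your suspicion is justified: Step~3 cannot be completed as the statement demands when $n\ge 2$. The paper's extremal $f_1$, with all $b_\beta=n(1-\alpha)/m$, is not in the class. On the diagonal $z=(r,\ldots,r)$ every term of $\partial g/\partial z_k$ is positive. The degree-$m$ terms alone already give $\sum_k|\partial g/\partial z_k|\ge n(1-\alpha)\binom{m+n-1}{n-1}r^{m-1}$, which exceeds $n(1-\alpha)=\sum_k\Re(\partial h/\partial z_k-\alpha)$ as $r\to 1$.

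Worse, the stated bound is never attained for $n\ge 2$. In the per-$k$ estimate above only the $\binom{m+n-2}{n-1}$ multi-indices with $\beta_k\ge 1$ contribute. The same argument therefore gives $\sum_{|\beta|=m}|b_\beta|\le n\binom{m+n-2}{n-1}\frac{1-\alpha}{m}$, which is strictly smaller when $n\ge 2$; for $n=m=2$ it is $2(1-\alpha)$ against the claimed $3(1-\alpha)$.

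What is sharp is the per-coefficient bound $|b_\beta|\le n(1-\alpha)/m$. Your example $\sum_j z_j+\overline{\frac{n(1-\alpha)}{m}z_1^m}$ attains it. This bound coincides with the summed bound only when $n=1$, where the statement reduces to Theorem~A.
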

\begin{proof} Let $f=h+\ol g\in \mathscr{P}_{\mathcal{H}_n^0}(\alpha)$. Then
\begin{align}\label{Th2:1.1}
\sum\limits_{k=1}^{n}\Re\left(\frac{\partial h(z)}{\partial z_k}-\alpha\right)>\sum\limits_{k=1}^n\left|\frac{\partial g(z)}{\partial z_k}\right|
\end{align}
for all $z=(z_1,z_2,\ldots,z_n)\in \mathbb{P} \Delta(0_n;1_n)$. It follows from (\ref{Eq 1.7}) that
\begin{align}\label{Th2:1.2}
h(z)=\sum\limits_{j=1}^n z_j+\sum\limits_{m=2}^{\infty}P_m(z)\quad {and} \quad g(z)=\sum\limits_{m=2}^{\infty} Q_m(z)
\end{align}
for all $z\in \mathbb{P} \Delta(0_n;1_n)$, where
\begin{align}\label{Th2:1.3}
P_{m}(z)=\sum\limits_{|\beta|=m} a_{\beta} z^{\beta}\quad \text{and}\quad Q_m(z)=\sum\limits_{|\beta|=m} b_{\beta} z^{\beta}
\end{align}
are homogeneous polynomials of degree $m\geq 2$ in $z\in \mathbb{P} \Delta(0_n;1_n)$. We know that the number of terms in $\sum\limits_{|\beta|=m}$ is $\binom{|\beta|+n-1}{n-1}$. A simple computation using (\ref{Th2:1.3}) shows that
\begin{align}\label{Th2:1.3a}
\frac{\partial Q_m(z)}{\partial z_k}=\sum\limits_{|\beta|=m}\beta_{k} b_{\beta}z_1^{\beta_1}\ldots z_{j-1}^{\beta_{j-1}}z_{j}^{\beta_j}z_{j+1}^{\beta_{j+1}}\ldots z_{k-1}^{\beta_{k-1}}z_{k}^{\beta_{k}-1}z_{k+1}^{\beta_{k+1}}\ldots z_n^{\beta_n}
\end{align}

Using (\ref{Th2:1.2}) together with (\ref{Th2:1.3a}), we get
\begin{align}\label{Th2:1.5a}
\frac{\partial g(z)}{\partial z_k}&=
\sum\limits_{m=2}^{\infty} \frac{\partial Q_m(z)}{\partial z_k}\\=&\sum\limits_{m=2}^{\infty}\sum\limits_{|\beta|=m}\beta_{k} b_{\beta}z_1^{\beta_1}\ldots z_{j-1}^{\beta_{j-1}}z_{j}^{\beta_j}z_{j+1}^{\beta_{j+1}}\ldots z_{k-1}^{\beta_{k-1}}z_{k}^{\beta_{k}-1}z_{k+1}^{\beta_{k+1}}\ldots z_n^{\beta_n}.\nonumber
\end{align}

By applying Cauchy's integral formula to $\frac{\partial g(z)}{\partial z_k}$, it follows from \eqref{Th2:1.5a} that
\begin{align}\label{Th2:1.6a}
&(2\pi i)^n \beta_{k}b_{\beta}\\=&\int\limits_{|z_1|=r_1}\ldots \int\limits_{|z_n|=r_n}\frac{\frac{\partial g(z)}{\partial z_k}\;dz_1\;d z_2\ldots d z_n}{z_1^{\beta_1+1}\ldots z_{j-1}^{\beta_{j-1}+1}z_{j}^{\beta_j+1}z_{j+1}^{\beta_{j+1}+1}\ldots z_{k-1}^{\beta_{k-1}+1}z_{k}^{\beta_{k}}z_{k+1}^{\beta_{k+1}+1}\ldots z_n^{\beta_n+1}}.\nonumber
\end{align}
where $0<r_j<1$ for $j=1,2,\ldots,n$. We set $z=\left(re^{\iota\theta_1},re^{\iota\theta_2},\ldots,re^{\iota\theta_n}\right)$, where $0\leq \theta_j\leq 2\pi$, $j=1,2,\ldots,n$.
Now from (\ref{Th2:1.6a}), we have
\begin{align}\label{Th2:1.6b}
(2\pi i)^n \beta_{k}|b_{\beta}|\leq \int\limits_{0}^{2\pi}\ldots \int\limits_0^{2\pi}\frac{\left|\frac{\partial g(z)}{\partial z_k}\right|\;d\theta_1 d\theta_2\ldots d\theta_n}{r^{m-1}}
\end{align}
and so
\begin{align}\label{Th2:1.7}
(2\pi)^n\sum\limits_{|\beta|=m}\beta_k|b_{\beta}|\leq \binom{|\beta|+n-1}{n-1}\int\limits_{0}^{2\pi}\ldots \int\limits_0^{2\pi}\frac{\left|\frac{\partial g(z)}{\partial z_k}\right|\;d\theta_1d\theta_2\ldots d\theta_n}{r^{m-1}}
\end{align}
for all $k=1,2,\ldots,n$. Consequently from (\ref{Th2:1.7}), we obtain
\begin{align}\label{Th2:1.8}
(2\pi)^nr^{m-1} \sum\limits_{k=1}^n\sum\limits_{|\beta|=m} \beta_k|b_{\beta}|=&
(2\pi)^nr^{m-1} \sum\limits_{|\beta|=m}|b_{\beta}|\sum\limits_{k=1}^n \beta_k\\ \leq &\binom{|\beta|+n-1}{n-1}\int\limits_{0}^{2\pi}\ldots \int\limits_0^{2\pi}\sum\limits_{k=1}^n\left|\frac{\partial g(z)}{\partial z_k}\right|\;d\theta_1d\theta_2\ldots d\theta_n.\nonumber
\end{align}
However, we observe that 
\begin{align}\label{Th2:1.8a}
\sum\limits_{|\beta|=m}|b_{\beta}|\sum\limits_{k=1}^n\beta_{k}=m\sum\limits_{|\beta|=m}|b_{\beta}|.
\end{align}

Using (\ref{Th2:1.8a}) together with (\ref{Th2:1.8}), we get
\begin{align}\label{Th2:1.8b}
(2\pi)^nr^{m-1}m \sum\limits_{|\beta|=m}|b_{\beta}|
 \leq \binom{|\beta|+n-1}{n-1}\int\limits_{0}^{2\pi}\ldots \int\limits_0^{2\pi}\sum\limits_{k=1}^n\left|\frac{\partial g(z)}{\partial z_k}\right|\;d\theta_1 d\theta_2\ldots d\theta_n.
\end{align}

Moreover, we know that
\begin{align}\label{Th2:1.8c}
&\frac{\partial h(z)}{\partial z_k}\\=&1+
\sum\limits_{m=2}^{\infty}\sum\limits_{|\beta|=m}\beta_{k} a_{\beta}z_1^{\beta_1}\ldots z_{j-1}^{\beta_{j-1}}z_{j}^{\beta_j}z_{j+1}^{\beta_{j+1}}\ldots z_{k-1}^{\beta_{k-1}}z_{k}^{\beta_{k}-1}z_{k+1}^{\beta_{k+1}}\ldots z_n^{\beta_n}.\nonumber
\end{align}

For any multi-index $\nu=(\nu_1,\nu_2,\ldots,\nu_n)$, we have
\begin{align}\label{Th2:1.9}
\int\limits_{0}^{2\pi}\ldots \int\limits_0^{2\pi} (e^{i\theta_1})^{\nu_1}\ldots (e^{i\theta_n})^{\nu_n}d\theta_1 \ldots d\theta_n=
\begin{cases}
0,& \nu\neq (0,0,\ldots,0),\\[2ex]
(2\pi)^n,& \nu=(0,0,\ldots,0).
\end{cases}
\end{align}

Using (\ref{Th2:1.9}) together with (\ref{Th2:1.8c}), we deduce that
\begin{align*}
\frac{1}{(2\pi)^n}\int\limits_{0}^{2\pi}\ldots \int\limits_0^{2\pi}\left(\frac{\partial h(z)}{\partial z_k}-\alpha\right)\;d\theta_1\;d \theta_2\ldots d \theta_n=1-\alpha
\end{align*}
and so we get
\begin{align*}
\frac{1}{(2\pi)^n}\int\limits_{0}^{2\pi}\ldots \int\limits_0^{2\pi}\Re\left(\frac{\partial h(z)}{\partial z_k}-\alpha\right)\;d\theta_1\;d \theta_2\ldots d \theta_n=1-\alpha.
\end{align*}
Consequently
\begin{align}\label{Th2:1.9a}
\frac{1}{(2\pi)^n}\int\limits_{0}^{2\pi}\ldots \int\limits_0^{2\pi} \sum\limits_{k=1}^n\Re\left(\frac{\partial h(z)}{\partial z_k}-\alpha\right)\;d\theta_1\;d \theta_2\ldots d \theta_n=n(1-\alpha).
\end{align}
In view of (\ref{Th2:1.1}) and (\ref{Th2:1.9a}) and using (\ref{Th2:1.8b}), we obtain
\begin{align*}
&r^{m-1}m\sum\limits_{|\beta|=m}|b_{\beta}|
\\ \leq &\binom{|\beta|+n-1}{n-1}\frac{1}{(2\pi)^n}\int\limits_{0}^{2\pi}\ldots \int\limits_0^{2\pi}\sum\limits_{k=1}^n\left|\frac{\partial g(z)}{\partial z_k}\right|\;d\theta_1\;d \theta_2\ldots d \theta_n\\\leq &
\binom{|\beta|+n-1}{n-1}\frac{1}{(2\pi)^n}\int\limits_{0}^{2\pi}\ldots \int\limits_0^{2\pi}\sum\limits_{k=1}^n\Re\left(\frac{\partial h(z)}{\partial z_k}-\alpha\right)\;d\theta_1\;d \theta_2\ldots d \theta_n\\\leq&n
\binom{|\beta|+n-1}{n-1}(1-\alpha).
\end{align*}

Letting $r\to 1^-$, we obtain
\begin{align*}
\sum\limits_{|\beta|=m}|b_{\beta}|\leq n\binom{m+n-1}{n-1}\frac{1-\alpha}{m}
\end{align*}
if $m\geq 2$.\vspace{1.2mm}

To show that the bound is sharp, we consider the following function
\begin{align*}
f_1(z)=\sum\limits_{j=1}^n z_j+\sum\limits_{m=2}^{\infty}\sum\limits_{|\beta|=m} b_{\beta} \ol{z^{\beta}},
\end{align*}
where $b_{\beta}=\frac{n(1-\alpha)}{m}$
for all multi-index $\beta=(\beta_1,\beta_2,\ldots,\beta_n)$ such that $|\beta|=m$. 
A direct computation shows that $f_1\in \mathscr{P}_{\mathcal{H}_n^0}(\alpha)$, and 
\begin{align*}
\sum\limits_{|\beta|=m}|b_{\beta}(f_1)|=n\binom{m+n-1}{n-1}\frac{1-\alpha}{m}.
\end{align*} 
\end{proof}

\begin{lem}\label{Lm-3.1}\emph{\cite[Theorem 6.1.4]{Graham-Kohr}} Let $f$ be holomorphic in the polydisk $\mathbb{P}\Delta(0_n;1_n)$ such that $|f(z)|\leq 1$ for all $z\in \mathbb{P}\Delta(0_n;1_n)$. Then
\begin{align*}
\left|\frac{\partial^{|\beta|} f(0)}{\partial z_1^{\beta_1}\ldots \partial z_n^{\beta_n}}\right|\leq \beta!
\end{align*}
 for multi-index $\beta=(\beta_1,\ldots, \beta_n)$.
\end{lem}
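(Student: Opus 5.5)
The plan is to apply the several–variable Cauchy integral formula on a distinguished boundary torus strictly inside $\mathbb{P}\Delta(0_n;1_n)$, estimate the integrand using $|f|\le 1$, and then let the radii tend to $1$. No earlier result from the paper is needed beyond the absolutely convergent power series expansion \eqref{In-1.1} of holomorphic functions on the polydisc.

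Fix $r\in(0,1)$ and let $T_r=\{z:|z_1|=\cdots=|z_n|=r\}$, whose closure lies in $\mathbb{P}\Delta(0_n;1_n)$, so $f$ is holomorphic on a neighbourhood of the closed polydisc $\overline{\mathbb{P}\Delta(0;r)}$. First, iterate the one-variable Cauchy formula in each coordinate to obtain
\begin{align*}
f(w)=\frac{1}{(2\pi i)^n}\int_{|\zeta_1|=r}\cdots\int_{|\zeta_n|=r}\frac{f(\zeta)\,d\zeta_1\cdots d\zeta_n}{(\zeta_1-w_1)\cdots(\zeta_n-w_n)},\qquad w\in\mathbb{P}\Delta(0;r).
\end{align*}
Second, differentiate under the integral sign. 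This is justified because the integrand is smooth in $w$ and uniformly bounded for $w$ near $0$. Evaluating at $w=0$ gives
\begin{align*}
\frac{\partial^{|\beta|} f(0)}{\partial z_1^{\beta_1}\cdots\partial z_n^{\beta_n}}=\frac{\beta!}{(2\pi i)^n}\int_{T_r}\frac{f(\zeta)\,d\zeta_1\cdots d\zeta_n}{\zeta_1^{\beta_1+1}\cdots\zeta_n^{\beta_n+1}}.
\end{align*}
Equivalently, one can read off $a_\beta=\partial^\beta f(0)/\beta!$ from \eqref{In-1.1} and use the orthogonality relation \eqref{Th2:1.9}.

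Third, parametrize the torus by $\zeta_j=re^{i\theta_j}$, so that $d\zeta_j=ire^{i\theta_j}\,d\theta_j$. The modulus of the integrand is then at most $r^{-|\beta|}$ times $|f(\zeta)|\le1$, and the angular integral contributes $(2\pi)^n$. This yields
\begin{align*}
\left|\frac{\partial^{|\beta|} f(0)}{\partial z_1^{\beta_1}\cdots\partial z_n^{\beta_n}}\right|\le \frac{\beta!}{r^{|\beta|}}.
\end{align*}
Finally, since the left-hand side does not depend on $r$, letting $r\to1^-$ gives the claimed bound $\beta!$.

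The only step requiring any care is the second, namely justifying the multivariable Cauchy formula and differentiation under the integral. Both follow from applying the one-variable formula successively in $z_1,\dots,z_n$ on a compact polydisc inside the domain. This is standard, and the remaining steps are routine estimates.
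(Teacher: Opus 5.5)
Your proof is correct. It is the standard Cauchy-inequality argument: apply the iterated Cauchy formula on the torus $|\zeta_j|=r$, bound the integrand by $r^{-|\beta|}$, and let $r\to 1^-$. The paper gives no proof of its own and simply quotes the result from Graham and Kohr, where it is established in essentially this way.
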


Let $G\not=\varnothing$ be an open subset of $\mathbb{C}^n$. Let $f$ be a holomorphic function on $G$. For a point $a\in\mathbb{C}^n$, we write 
\begin{align*}
	f(z)=\sum_{i=0}^{\infty}P_i(z-a),
\end{align*}
where the term $P_i(z-a)$ is either identically zero or a homogeneous polynomial of degree $i$. Denote the zero-multiplicity of $f$ at $a$ by 
\begin{align*}
	k=\min\{i:P_i(z-a)\not\equiv 0\}.
\end{align*}
Clearly $1$ is the zero-multiplicity of $f$ at $a$ when $f(a)=0$ and $\frac{\partial f(a)}{\partial z_j}\neq 0$ for some $j=1,2,\ldots,n$.

Now we state the multidimensional version of Theorem C ($(i)$ and $(ii)$).
\begin{theo}\label{Th-1.3} Let $f=h+\ol g\in \mathscr{P}_{\mathcal{H}_n^0}(\alpha)$ and be given by \emph{(\ref{Eq 1.7})}. Then for any multi-index $\beta=(\beta_1,\beta_2,\ldots,\beta_n)$ such that $|\beta|=m\geq 2$, we have
\begin{enumerate}
\item[\emph{(i)}] $\displaystyle \left|\sum\limits_{|\beta|=m}a_{\beta}\right|+\left|\sum\limits_{|\beta|=m}b_{\beta}\right|\leq 2\binom{m+n-1}{n-1}\frac{n(1-\alpha)}{m}$,\vspace{1.2mm}
\item[\emph{(ii)}] $\displaystyle \left|\;\left|\sum\limits_{|\beta|=m}a_{\beta}\right|-\left|\sum\limits_{|\beta|=m}b_{\beta}\right|\;\right|\leq 2\binom{m+n-1}{n-1}\frac{n(1-\alpha)}{m}$,\vspace{1.2mm}
\item[\emph{(iii)}] 
$\displaystyle \left|\sum\limits_{|\beta|=m}a_{\beta}\right|\leq 2\binom{m+n-1}{n-1}\frac{n(1-\alpha)}{m}$.
\end{enumerate}
All three inequalities are sharp.
\end{theo}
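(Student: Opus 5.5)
The plan is to reduce everything to a one-variable Carathéodory-type estimate along the diagonal of the polydisc, using the device behind Theorem B: mix $h$ and $g$ with a unimodular parameter. Fix $\lambda\in\mathbb{C}$ with $|\lambda|=1$ and put $F_\lambda=h+\lambda g$, which is holomorphic in $\mathbb{P}\Delta(0;1)$. By the defining inequality of $\mathscr{P}_{\mathcal{H}_n^0}(\alpha)$ and $\Re(\lambda w)\ge -|w|$,
\begin{align*}
\sum_{k=1}^n\Re\left(\frac{\partial F_\lambda(z)}{\partial z_k}-\alpha\right)\ge\sum_{k=1}^n\Re\left(\frac{\partial h(z)}{\partial z_k}-\alpha\right)-\sum_{k=1}^n\left|\frac{\partial g(z)}{\partial z_k}\right|>0 .
\end{align*}

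Next I restrict to the diagonal $z=(\zeta,\ldots,\zeta)$ with $\zeta\in\mathbb{D}$. By the chain rule, $\frac{d}{d\zeta}F_\lambda(\zeta,\ldots,\zeta)=\sum_k \frac{\partial F_\lambda}{\partial z_k}(\zeta,\ldots,\zeta)$. Moreover, by (\ref{Eq 1.7}) the coefficient of $\zeta^m$ in $F_\lambda(\zeta,\ldots,\zeta)$ is $A_m+\lambda B_m$, where $A_m=\sum_{|\beta|=m}a_\beta$ and $B_m=\sum_{|\beta|=m}b_\beta$. Hence
\begin{align*}
\varphi(\zeta)=\frac{1}{n(1-\alpha)}\left(\sum_{k=1}^n\frac{\partial F_\lambda}{\partial z_k}(\zeta,\ldots,\zeta)-n\alpha\right)=1+\sum_{m=2}^\infty \frac{m(A_m+\lambda B_m)}{n(1-\alpha)}\zeta^{m-1}
\end{align*}
is holomorphic in $\mathbb{D}$, satisfies $\varphi(0)=1$ and $\Re\varphi>0$. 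Carathéodory's lemma (each Taylor coefficient of such a function has modulus at most $2$) then gives
\begin{align*}
|A_m+\lambda B_m|\le \frac{2n(1-\alpha)}{m}\qquad\text{for all }|\lambda|=1 .
\end{align*}
If one prefers to stay inside the paper's toolkit, this can instead be obtained directly by the torus-averaging argument of Theorem \ref{Th-1.2}. That route uses $|c|\le\frac{1}{\pi}\int_0^{2\pi}\Re\varphi$ on circles of radius $r$ and then lets $r\to1^-$.

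Choosing $\lambda$ so that $\lambda B_m$ has the same argument as $A_m$ yields $|A_m|+|B_m|\le 2n(1-\alpha)/m$. This is at most the right-hand side of (i), because $\binom{m+n-1}{n-1}\ge1$. Then (ii) follows from $\bigl||A_m|-|B_m|\bigr|\le|A_m|+|B_m|$, and (iii) from $|A_m|\le|A_m|+|B_m|$.

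The step I expect to be the real obstacle is sharpness. The argument above gives the stronger bound $2n(1-\alpha)/m$, and the binomial factor strictly exceeds $1$ when $n\ge2$ and $m\ge2$. So the inequalities as printed can be attained only in the case $n=1$, which is exactly Theorem B. For that case, and for the stronger diagonal bound in every $n$, I would test the natural extremal
\begin{align*}
f(z)=(1-\alpha)\left(-s-2\log(1-s)\right)+\alpha s,\qquad s=z_1+\cdots+z_n,
\end{align*}
with $g\equiv0$. I would check that $\sum_k\Re(\partial h/\partial z_k-\alpha)>0$ at least on the region where $s\in\mathbb{D}$, and note that this point needs care since $s$ can leave $\mathbb{D}$ in the polydisc. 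On the diagonal this $f$ attains $|A_m|=2n(1-\alpha)/m$. If sharpness is required in the printed form, I would try to replace the binomial factor by the count of monomials, as in the proof of Theorem \ref{Th-1.2}. My expectation, however, is that the optimal constant is the diagonal one $2n(1-\alpha)/m$.
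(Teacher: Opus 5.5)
Your proof of the three inequalities is correct, and it takes a different and shorter route than the paper. Both arguments start from $F_\varepsilon=h+\varepsilon g$ and the positivity of $\Re\sum_k\partial F_\varepsilon/\partial z_k-n\alpha$. From there the paper works with the $n$-variable Carath\'eodory function $P$ on the whole polydisc. It bounds each homogeneous block by $\sum_{|\beta|=k}|P_\beta|\le 2\binom{k+n-1}{n-1}$, using roots-of-unity averaging, the Cayley transform and the Cauchy estimate of Lemma~\ref{Lm-3.1}. Only afterwards does it set $z_1=\cdots=z_n$ to compare coefficients. That monomial count is the only source of the binomial factor in the statement. (Incidentally, the degree-$(m-1)$ comparison there should involve $\sum_{|\beta|=m-1}P_\beta$, not $\sum_{|\beta|=m}P_\beta$.) Since (i)--(iii) only involve the diagonal sums $A_m,B_m$, restricting to the diagonal first loses nothing. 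The one-variable Carath\'eodory lemma then gives $|A_m+\lambda B_m|\le 2n(1-\alpha)/m$, which is strictly stronger than the printed bounds when $n\ge2$. The paper's route controls every individual $P_\beta$, which is more than the theorem needs, and that is exactly what costs the extra factor.

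You are also right that the sharpness assertion fails for $n\ge2$. It follows that the paper's extremal $f_2$ (with $a_\beta=2n(1-\alpha)/m$ for all $|\beta|=m$) does not lie in $\mathscr{P}_{\mathcal{H}_n^0}(\alpha)$. Its diagonal sums $A_m=2\binom{m+n-1}{n-1}n(1-\alpha)/m$ violate your bound. Concretely, for $n=2$ and $\alpha=0$ one gets $\sum_k\partial h/\partial z_k=2-\tfrac{32}{9}<0$ at $z=(-\tfrac12,-\tfrac12)$.

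The one real error is your candidate extremal for the improved constant. Take $s=z_1+\cdots+z_n$. For $n\ge2$ the function $(1-\alpha)(-s-2\log(1-s))+\alpha s$ is not holomorphic on $\mathbb{P}\Delta(0;1)$; for instance $s=1$ at $z=(\tfrac12,\tfrac12,0,\ldots,0)$. Its diagonal coefficients are $A_m=2(1-\alpha)n^m/m$, not $2n(1-\alpha)/m$. These exceed your own bound, which confirms the function is not in the class. A correct extremal is
\[
h(z)=\sum_{j=1}^n\Bigl(z_j+2(1-\alpha)\sum_{m=2}^{\infty}\frac{z_j^m}{m}\Bigr),\qquad g\equiv0 .
\]
Here $\partial h/\partial z_k-\alpha=(1-\alpha)\frac{1+z_k}{1-z_k}$ has positive real part for every $k$, while $A_m=2n(1-\alpha)/m$ and $B_m=0$. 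Equivalently, rescale your candidate to $n\,\phi(s/n)$ with $\phi(w)=w+2(1-\alpha)\sum_{m\ge2}w^m/m$, noting that $|s/n|\le\|z\|_\infty<1$. Either function gives equality in all of (i)--(iii) with constant $2n(1-\alpha)/m$. So $2n(1-\alpha)/m$, not the printed constant, is the sharp bound in every dimension.
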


\begin{proof}Let $f=h+\ol g\in \mathscr{P}_{\mathcal{H}_n^0}(\alpha)$. Note that 
\begin{align*}
\displaystyle \sum\limits_{k=1}^n\Re\left(\frac{\partial h(z)}{\partial z_k}+\varepsilon \frac{\partial g(z)}{\partial z_k}-\alpha\right)=&
\sum\limits_{k=1}^n\Re\left(\frac{\partial h(z)}{\partial z_k}-\alpha\right)+\sum\limits_{k=1}^n\Re\left(\varepsilon \frac{\partial g(z)}{\partial z_k}\right)\\>&
\sum\limits_{k=1}^n\Re\left(\frac{\partial h(z)}{\partial z_k}-\alpha\right)-\sum\limits_{k=1}^n\left|\varepsilon \frac{\partial g(z)}{\partial z_k}\right|\\=&
\sum\limits_{k=1}^n\Re\left(\frac{\partial h(z)}{\partial z_k}-\alpha\right)-\sum\limits_{k=1}^n\left|\frac{\partial g(z)}{\partial z_k}\right|
\end{align*}
for all $\varepsilon\in\mathbb{C}$ such that $|\varepsilon|=1$. Therefore from \eqref{Th2:1.1}, we deduce that 
\begin{align}\label{Th3:1.1}
\displaystyle\Re\left(\sum\limits_{k=1}^n\left(\frac{\partial h(z)}{\partial z_k}+\varepsilon \frac{\partial g(z)}{\partial z_k}\right)\right)-n\alpha=\sum\limits_{k=1}^n\Re\left(\frac{\partial h(z)}{\partial z_k}+\varepsilon \frac{\partial g(z)}{\partial z_k}-\alpha\right)>0
\end{align}
for all $z=(z_1,z_2,\ldots,z_n)\in \mathbb{P} \Delta(0_n;1_n)$. Let $F_{\varepsilon}(z)=h(z)+\varepsilon g(z)$. Clearly
\begin{align*}
\displaystyle F_{\varepsilon}(z)=\sum\limits_{j=1}^n z_j+\sum\limits_{m=2}^{\infty}\left(P_m(z)+\varepsilon Q_m(z)\right),
\end{align*}
for all $z\in \mathbb{P} \Delta(0_n;1_n)$, where $P_m(z)$ and $Q_m(z)$ are defined in (\ref{Th2:1.3}). Note that
 \begin{align*}
 \displaystyle \frac{\partial F_{\varepsilon}(z)}{\partial z_k}=&1+
 \sum\limits_{m=2}^{\infty} \frac{\partial \left(P_m(z)+\varepsilon Q_m(z)\right)}{\partial z_k}\\=&1+\sum\limits_{m=2}^{\infty}\sum\limits_{|\beta|=m}\beta_{k} c_{\beta}z_1^{\beta_1}\ldots z_{j-1}^{\beta_{j-1}}z_{j}^{\beta_j}z_{j+1}^{\beta_{j+1}}\ldots z_{k-1}^{\beta_{k-1}}z_{k}^{\beta_{k}-1}z_{k+1}^{\beta_{k+1}}\ldots z_n^{\beta_n},
 \end{align*}
where 
$c_{\beta}=a_{\beta}+\varepsilon b_{\beta}$.
Consequently
\begin{align}\label{Th3:1.2}
\displaystyle &\sum\limits_{k=1}^n\frac{\partial F_{\varepsilon}(z)}{\partial z_k}\\ =&n+
\sum\limits_{m=2}^{\infty}\sum\limits_{|\beta|=m}\sum\limits_{k=1}^n\beta_{k} c_{\beta}z_1^{\beta_1}\ldots z_{j-1}^{\beta_{j-1}}z_{j}^{\beta_j}z_{j+1}^{\beta_{j+1}}\ldots z_{k-1}^{\beta_{k-1}}z_{k}^{\beta_{k}-1}z_{k+1}^{\beta_{k+1}}\ldots z_n^{\beta_n}\nonumber
\end{align}
for all $z=(z_1,z_2,\ldots,z_n)\in \mathbb{P} \Delta(0_n;1_n)$. Then from (\ref{Th3:1.1}), we see that
\begin{align*} \Re\left(\sum\limits_{k=1}^n\frac{\partial F_{\varepsilon}(z)}{\partial z_k}\right)-n\alpha>0\;\;\text{and}\;\;\sum\limits_{k=1}^n\frac{\partial F_{\varepsilon}(0)}{\partial z_k}-n\alpha=n(1-\alpha).
\end{align*}
Therefore, there exists a function \begin{align}\label{Th3:1.3}
	P(z)=1+\sum\limits_{m=1}^{\infty}\sum\limits_{|\beta|=m}P_{\beta}z^{\beta}
\end{align}
 such that
\begin{align}\label{Th3:1.4}
\sum\limits_{k=1}^n\frac{\partial F_{\varepsilon}(z)}{\partial z_k}=n+n(1-\alpha)\sum\limits_{m=1}^{\infty}\sum\limits_{|\beta|=m}P_{\beta}z^{\beta}
\end{align}
for all $z\in \mathbb{P}\Delta(0_n;1_n)$, where $P(z)$ is holomorphic in $\mathbb{P}\Delta(0_n;1_n)$ and $\Re(P(z))>0$ in $\mathbb{P}\Delta(0_n;1_n)$. 

Let $\omega_l=e^{\frac{2\pi \iota}{k}l}$, where $k\geq 1$ is an integer. We see that $\sum_{l=1}^k \omega_l^s=0$ if $s\leq k-1$. Let
\begin{align*}
	\tilde g(z)=\frac{1}{k}\sum\limits_{l=1}^k P\left(\omega_l z\right).
\end{align*}

The function $\tilde g$ is clearly holomorphic in $\mathbb{P}\Delta(0_n;1_n)$ such that $\Re\{\tilde g(z)\} > 0$ throughout the domain. Furthermore, we see that $\tilde g(0) =1$ and $\tilde g$ has the series expansion
\begin{align*}
	\tilde g(z)=1+\sum\limits_{m=1}^{\infty}\sum\limits_{|\beta|=mk} P_{\beta}z^{\beta}
\end{align*}
for all $z\in \mathbb{P}\Delta(0_n;1_n)$. Let
\begin{align}
	\label{Eq-3.20}\phi(z)=\frac{\tilde g(z)-1}{\tilde g(z)+1}
\end{align}
for all $z\in \mathbb{P}\Delta(0_n;1_n)$. One can easily verify that $|\phi(z)|<1$ for all $z\in \mathbb{P}\Delta(0_n;1_n)$ and $k$ is the zero-multiplicity of $\phi$ at $0$. We expand $\phi(z)$ to a Taylor series with multi-index $\beta$,
\begin{align}
	\label{Eq-3.21} \phi(z)=\sum\limits_{m=k}^{\infty}\sum\limits_{|\beta|=m}Q_{\beta}z^{\beta}
\end{align}
By Lemma \ref{Lm-3.1}, we have $|Q_{\beta}|\leq 1$
%\begin{align}
%\label{Eq-3.22} |b_{\beta}|\leq 1
%\end{align}
for multi-index $\beta$. Now from \eqref{Eq-3.20} and \eqref{Eq-3.21}, we deduce that
\begin{align*}
	\sum\limits_{|\beta|=k} P_{\beta}z^{\beta}+\sum\limits_{m=2}^{\infty}\sum\limits_{|\beta|=mk} P_{\beta}z^{\beta}&=2 \sum\limits_{m=k}^{\infty}\sum\limits_{|\beta|=m}Q_{\beta}z^{\beta}\\&+
	\left(\sum\limits_{|\beta|=k} P_{\beta}z^{\beta}+\sum\limits_{m=2}^{\infty}\sum\limits_{|\beta|=mk} P_{\beta}z^{\beta}\right)\left(\sum\limits_{m=k}^{\infty}\sum\limits_{|\beta|=m}Q_{\beta}z^{\beta}\right).
\end{align*}
It then follows that
\begin{align}
	\label{Eq-3.23} \sum\limits_{|\beta|=k} \left(P_{\beta}-2Q_{\beta}\right)z^{\beta}+\text{higher degree terms of}\; (z_1^{\beta_1},\ldots,z_n^{\beta_n})\equiv 0
\end{align}
holds for all $z\in \mathbb{P}\Delta(0_n;1_n)$. Comparing the homogeneous terms of degree $k$ in
\eqref{Eq-3.23},
we obtain 
\begin{align}
	\label{Eq-3.24}\sum\limits_{|\beta|=k} \left(P_{\beta}-2Q_{\beta}\right)=0.
\end{align}
\noindent Since $|Q_{\beta}|\leq 1$ for multi-index $\beta$, from \eqref{Eq-3.24}, we conclude that
\begin{align}\label{Th3:1.5}
	\sum\limits_{|\beta|=k} |P_{\beta}|\leq 2\sum\limits_{|\beta|=k} |Q_{\beta}|\leq 2\sum\limits_{|\beta|=k} 1\leq 2\binom{k+n-1}{n-1},
\end{align}
where $\beta=(\beta_1,\beta_2,\ldots,\beta_n)$ such that $|\beta|=k$. \vspace{1.2mm}

On the other hand, from (\ref{Th3:1.2}) and (\ref{Th3:1.4}), we obtain
\begin{align}\label{Th3:1.6}
&\sum\limits_{m=2}^{\infty}\sum\limits_{|\beta|=m}\sum\limits_{k=1}^n\beta_{k} c_{\beta}z_1^{\beta_1}\ldots z_{j-1}^{\beta_{j-1}}z_{j}^{\beta_j}z_{j+1}^{\beta_{j+1}}\ldots z_{k-1}^{\beta_{k-1}}z_{k}^{\beta_{k}-1}z_{k+1}^{\beta_{k+1}}\ldots z_n^{\beta_n}\\=&n(1-\alpha)\sum\limits_{m=1}^{\infty}\sum\limits_{|\beta|=m}P_{\beta}z^{\beta}\nonumber
\end{align}
where $\beta=(\beta_1,\beta_2,\ldots,\beta_n)$ such that $|\beta|=m\geq 2$. Putting $z_1=z_2=\ldots=z_n=z$ on both sides of \eqref{Th3:1.6} and then comparing coefficients, we get
\begin{align*}
\sum\limits_{|\beta|=m}\sum\limits_{k=1}^n
\beta_kc_{\beta}=n(1-\alpha)\sum\limits_{|\beta|=m}P_{\beta}
\end{align*}
and so from \eqref{Th3:1.5}, we get
\begin{align}\label{Th3:1.7}
\left|\sum\limits_{|\beta|=m}a_{\beta}+\varepsilon\sum\limits_{|\beta|=m} b_{\beta}\right|\leq 2\binom{m+n-1}{n-1}\frac{n(1-\alpha)}{m}
\end{align}
for all $\beta=(\beta_1,\beta_2,\ldots,\beta_n)$ such that $|\beta|=m\geq 2$ and for all $\varepsilon$ such that $|\varepsilon|=1$. 
	
Suppose $A_{\beta}=\sum\limits_{|\beta|=m}a_\beta\neq 0$ and $B_{\beta}=\sum\limits_{|\beta|=m}b_\beta\neq 0$. If we choose 
\begin{align*}
\varepsilon=\frac{A_{\beta}}{|A_{\beta}|}\frac{\ol B_{\beta}}{|B_{\beta}|},
\end{align*}
then $|\varepsilon|=1$ and so from (\ref{Th3:1.7}), we have
\begin{align}\label{Th3:1.8}
|A_{\beta}+\varepsilon B_{\beta}|=|A_{\beta}|+|B_{\beta}|\leq 2\binom{m+n-1}{n-1}\frac{n(1-\alpha)}{m}
\end{align}
for $\beta=(\beta_1,\beta_2,\ldots,\beta_n)$ such that $|\beta|=m\geq 2$. If either $A_{\beta}=0$ or $B_{\beta}=0$, then (\ref{Th3:1.8}) also holds. Therefore, we have
\begin{align}\label{Th3:1.8a}
\left|\sum\limits_{|\beta|=m}a_{\beta}\right|+\left|\sum\limits_{|\beta|=m}b_{\beta}\right|\leq 2\binom{m+n-1}{n-1}\frac{n(1-\alpha)}{m}
\end{align}
for $\beta=(\beta_1,\beta_2,\ldots,\beta_n)$ such that $|\beta|=m\geq 2$.

On the other hand, we have
\begin{align*}
\left||A_{\beta}|-|B_{\beta}|\right|\leq |A_{\beta}+\varepsilon B_{\beta}|\leq 2\binom{m+n-1}{n-1}\frac{n(1-\alpha)}{m}
\end{align*}
for $\beta=(\beta_1,\beta_2,\ldots,\beta_n)$ such that $|\beta|=m\geq 2$. Consequently
\begin{align*}
\left|\;\left|\sum\limits_{|\beta|=m}a_{\beta}\right|-\left|\sum\limits_{|\beta|=m}b_{\beta}\right|\;\right|\leq 2\binom{m+n-1}{n-1}\frac{n(1-\alpha)}{m}
\end{align*}
for $\beta=(\beta_1,\beta_2,\ldots,\beta_n)$ such that $|\beta|=m\geq 2$.

It is clear from \eqref{Th3:1.8a} that 
\begin{align*}
\left|\sum\limits_{|\beta|=m}a_{\beta}\right|\leq 2\binom{m+n-1}{n-1}\frac{n(1-\alpha)}{m},
\end{align*}
where $\beta=(\beta_1,\beta_2,\ldots,\beta_n)$ such that $|\beta|=m\geq 2$.\vspace{1.2mm}

To show all the inequalities are sharp, we consider the following function
\begin{align}\label{Ex1}
f_2(z)=\sum\limits_{j=1}^n z_j+\sum\limits_{m=2}^{\infty}\sum\limits_{|\beta|=m} a_{\beta} z^{\beta},
\end{align}
where 
\begin{align*}
a_{\beta}=\frac{2n(1-\alpha)}{m}
\end{align*}
for all multi-index $\beta=(\beta_1,\beta_2,\ldots,\beta_n)$ such that $|\beta|=m$. 
A direct computation shows that $f_2\in \mathscr{P}_{\mathcal{H}_n^0}(\alpha)$, and 
\begin{align*}
\sum\limits_{|\beta|=m}|a_{\beta}(f_2)|=2\binom{m+n-1}{n-1}\frac{n(1-\alpha)}{m}.
\end{align*} 
\end{proof}

\section{{\bf Growth estimates for functions in $\mathscr{P}_{\mathcal{H}_n^0}(M)$}}\label{Sec-3}

In the following result, we have established the growth estimates for the functions in the class $\mathscr{P}_{\mathcal{H}_n^0}(\alpha)$.

\begin{theo}\label{Th-1.4}Let $f=h+\ol g\in \mathscr{P}_{\mathcal{H}_n^0}(\alpha)$ with $0<\alpha\leq 1$ and be given by \emph{(\ref{Eq 1.7})}. Then for $z\in \mathbb{P} \Delta(0_n;1_n)$, we have
\begin{align*}
	\left|\sum\limits_{j=1}^nz_j\right|+2n(1-\alpha)\sum\limits_{m=2}^{\infty}
	(-1)^{m-1}\frac{\|z\|_{\infty}^m}{m}\leq |f(z)|\leq n\|z\|_{\infty}+2n(1-\alpha)\sum\limits_{m=2}^{\infty}
	\frac{\|z\|_{\infty}^m}{m}.
\end{align*}
 Both the inequalities are sharp.
\end{theo}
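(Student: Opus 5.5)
We propose to derive the growth estimate from the triangle inequality applied to the homogeneous expansion, together with the coefficient bound of Theorem \ref{Th-1.3}(i). Write $f(z)=\sum_{j=1}^n z_j+\sum_{m\ge2}\bigl(P_m(z)+\ol{Q_m(z)}\bigr)$ with $P_m,Q_m$ as in (\ref{Th2:1.3}), and put $\rho=\|z\|_{\infty}$.

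\textbf{Upper bound.} Since $|z^\beta|\le\rho^{m}$ for $|\beta|=m$, we have $|f(z)|\le n\rho+\sum_{m\ge2}\bigl(|P_m(z)|+|Q_m(z)|\bigr)$. The key step is to bound $|P_m(z)|+|Q_m(z)|\le \frac{2n(1-\alpha)}{m}\rho^m$. For this we reuse the mechanism in the proof of Theorem \ref{Th-1.3} rather than its conclusion. Set $F_\varepsilon=h+\varepsilon g$ with $|\varepsilon|=1$, and restrict to the complex line $\zeta\mapsto \zeta w$, where $w=z/\rho$ lies in the closed polydisc and $|\zeta|<1$. The function $\varphi(\zeta)=F_\varepsilon(\zeta w)$ is analytic in $\mathbb D$ and $\varphi'(\zeta)=\sum_k w_k\,\partial_kF_\varepsilon(\zeta w)$. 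If $\Re\varphi'>\alpha$ held with $\varphi'(0)=\sum w_k$ normalized, the one-variable argument (Carath\'eodory bounds for functions of positive real part, as in Theorem B) would give the coefficient of $\zeta^m$ at most $2(1-\alpha)/m$ up to normalization, summing to $n$ times that. Choosing $\varepsilon$ to align the phases of $P_m(w)$ and $Q_m(w)$ then yields the desired bound. Summing over $m$ gives the upper inequality.

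\textbf{Lower bound.} We use $|f(z)|\ge\bigl|\sum z_j\bigr|-\sum_{m\ge2}\bigl(|P_m(z)|+|Q_m(z)|\bigr)$ in a refined form. Following Allu--Halder (Theorem C), the sharper route is to write $f(z)$ as an integral along the segment $t\mapsto tz$ and use a lower bound for $\Re$ of the derivative, $\Re\, p(\zeta)\ge \frac{1-|\zeta|}{1+|\zeta|}$, which produces the alternating series $\sum(-1)^{m-1}\rho^m/m$ after integrating $\frac{1-t}{1+t}$. Concretely, $\int_0^\rho\bigl(1+2(1-\alpha)\tfrac{-t}{1+t}\bigr)\,dt$ expands into the stated alternating series, with $|\sum z_j|$ replacing the linear term.

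\textbf{Sharpness.} For the upper bound take $f_2$ from (\ref{Ex1}) at $z=(\rho,\dots,\rho)$; it gives $h(z)=n\rho+2n(1-\alpha)\sum\rho^m/m$ up to the combinatorial factor, which we must reconcile. For the lower bound take the point $z=(-\rho,\dots,-\rho)$.

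\textbf{Main obstacle.} The hardest step is the mismatch between the hypothesis and the one-variable argument. The hypothesis controls $\sum_k\Re\,\partial_k h$, i.e.\ the derivative in the direction $(1,\dots,1)$, not $\sum_k w_k\partial_k$ for an arbitrary direction $w$. Hence the reduction to one variable is clean only along the diagonal direction. The bound from Theorem \ref{Th-1.3}, applied naively, also carries a factor $\binom{m+n-1}{n-1}$ that is absent from the stated estimate.

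I would first attempt the restriction $z\mapsto(\zeta,\dots,\zeta)$ and the estimate $|z^\beta|\le\rho^m$, hoping the coefficient sums $\sum_{|\beta|=m}a_\beta$ (not absolute sums) suffice. If they do not, the statement may need that binomial factor, or a hypothesis valid along all directions. In that case I would flag the gap rather than force the claimed bound.
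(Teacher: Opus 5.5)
Your ``main obstacle'' paragraph is correct, and the obstruction is fatal rather than technical: for $n\ge 2$ the statement is false. Consequently neither your conditional upper-bound step (``if $\Re\varphi'>\alpha$ held\ldots'') nor the integration along $t\mapsto tz$ for the lower bound can be completed. The hypothesis controls only $\Re\sum_k\partial_k h$, so any holomorphic term annihilated by $\sum_k\partial_k$ can be added to $h$ at no cost. Concretely, let $n=2$, $g\equiv0$ and $h(z)=z_1+z_2+C(z_1-z_2)^2$ with $C\in\mathbb{C}$ arbitrary. Then $h(0)=0$, $\nabla h(0)=(1,1)$ and $\partial_1h+\partial_2h\equiv2$. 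Hence $\sum_k\Re(\partial_kh-\alpha)=2(1-\alpha)>0=\sum_k|\partial_kg|$, i.e.\ $f=h\in\mathscr{P}_{\mathcal{H}_2^0}(\alpha)$ for every $\alpha\in[0,1)$ (for $\alpha=1$ the class is empty). With $C=2$ and $z=(\frac12,-\frac12)$ one gets $|f(z)|=2$, while the claimed upper bound equals $1+4(1-\alpha)(\log2-\frac12)<1.78$. With $C=-2$ and $z=(\frac12,0)$ one gets $f(z)=0$, while the claimed lower bound equals $\frac12+4(1-\alpha)(\log\frac32-\frac12)>0.12$. The paper's proof fails exactly where you anticipated. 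From $\sum_k\partial_kF_\varepsilon=n\alpha+n(1-\alpha)\frac{1+\omega}{1-\omega}$ it passes in \eqref{Th4:1.7} to the same formula for $\frac{d}{dt}F_\varepsilon(tw)$, $w=z/\|z\|_\infty$. In fact $\frac{d}{dt}F_\varepsilon(tw)=\sum_kw_k\,\partial_kF_\varepsilon(tw)$, and the two agree only when $w=(1,\ldots,1)$.

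What survives is precisely your fallback. For $\psi(t)=F_\varepsilon(t,\ldots,t)$ one has $\psi'=\sum_k\partial_kF_\varepsilon$, $\psi'(0)=n$ and $\Re\psi'>n\alpha$. So the one-variable argument behind Theorem C proves both inequalities at diagonal points $z=(\zeta,\ldots,\zeta)$, where $|\sum_jz_j|=n\|z\|_\infty$. That argument integrates the bounds $\frac{1-|t|}{1+|t|}\le\Re p\le|p|\le\frac{1+|t|}{1-|t|}$ for $p=\frac{\psi'-n\alpha}{n(1-\alpha)}$ along a radius, and combines them with $\bigl||h|-|g|\bigr|=\min_{|\varepsilon|=1}|h+\varepsilon g|\le|f|\le\max_{|\varepsilon|=1}|h+\varepsilon g|=|h|+|g|$. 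The diagonal bounds are sharp for $f(z)=n\,k_\alpha\bigl(\frac1n\sum_jz_j\bigr)$ with $k_\alpha(t)=t+2(1-\alpha)\sum_{m\ge2}t^m/m$, at $(\rho,\ldots,\rho)$ and, as you suggested, at $(-\rho,\ldots,-\rho)$. Your hope that the sums $\sum_{|\beta|=m}a_\beta$ suffice is right only in this restricted sense. They are the Taylor coefficients of $h(t,\ldots,t)$ and carry no information off the diagonal; in the example above they all vanish, for every $C$. Likewise the combinatorial factor in your sharpness attempt cannot be reconciled, because for $n\ge2$ the function $f_2$ of (\ref{Ex1}) is not in the class at all. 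The coefficient of $t$ in $\frac{d}{dt}f_2(t,\ldots,t)$ is $2n(1-\alpha)\binom{n+1}{2}$, which exceeds the Carath\'eodory bound $2n(1-\alpha)$. Flagging the gap rather than forcing the bound is the right conclusion.
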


\begin{proof} Let $f=h+\ol g\in \mathscr{P}_{\mathcal{H}_n^0}(\alpha)$. Let $F_{\varepsilon}=h+\varepsilon g$. Then from (\ref{Th3:1.1}), we see that
\begin{align*} \Re\left(\sum\limits_{k=1}^n\frac{\partial F_{\varepsilon}(z)}{\partial z_k}\right)-n\alpha>0\;\;\text{and}\;\;\sum\limits_{k=1}^n\frac{\partial F_{\varepsilon}(0)}{\partial z_k}-n\alpha=n(1-\alpha)
\end{align*}
for all $z=(z_1,z_2,\ldots,z_n)\in \mathbb{P} \Delta(0_n;1_n)$. Then there exists a holomorphic function $\omega(z)$ with $\omega(0)=0$ and $|\omega(z)|<1$ for all $z=(z_1,z_2,\ldots,z_n)\in \mathbb{P} \Delta(0_n;1_n)$ such that
\begin{align*}
\frac{\sum\limits_{k=1}^n\frac{\partial F_{\varepsilon}(z)}{\partial z_k}-n\alpha}{n(1-\alpha)}=\frac{1+\omega(z)}{1-\omega(z)}
\end{align*} 
or equivalently 
\begin{align}\label{Th4:1.1}	
	\sum\limits_{k=1}^n\frac{\partial F_{\varepsilon}(z)}{\partial z_k}=n\alpha+n(1-\alpha)\frac{1+\omega(z)}{1-\omega(z)}
\end{align}
for all $z=(z_1,z_2,\ldots,z_n)\in \mathbb{P} \Delta(0_n;1_n)$.	
Also by Lemma 6.1.28 \cite{Graham-Kohr}, we have
\begin{align}\label{Th4:1.1a}
|\omega(z)|\leq \|z\|_{\infty}
\end{align}
for all $z=(z_1,z_2,\ldots,z_n)\in \mathbb{P} \Delta(0_n;1_n)$.\vspace{1.2mm}

Let $z\in \mathbb{P}\Delta(0_n;1_n)$ be fixed in such a way that $z\neq 0$. Obviously $\|z\|_{\infty}<1$. 

We define 
\begin{align}\label{Th4:1.2}
	\tilde h(t)=h\left(\frac{z}{||z||_{\infty}}t\right)=\frac{1}{||z||_{\infty}}\left(\sum\limits_{j=1}^n z_j\right)t+\sum\limits_{m=2}^{\infty}P_m\left(\frac{z}{||z||_{\infty}}\right)t^m
\end{align}
and
\begin{align}\label{Th4:1.3}
	\tilde g(t)=g\left(\frac{z}{||z||_{\infty}}t\right)=\sum\limits_{k=2}^{\infty} Q_m\left(\frac{z}{||z||_{\infty}}\right)t^m,
\end{align}
where $h(z)$ and $g(z)$ are defined by (\ref{Th2:1.2}) and $t\in\mathbb{C}$ such that $|t|<1$. For a fixed $z\in \mathbb{P}\Delta(0_n;1_n)$, we can say that $\tilde g(t)$ and $\tilde h(t)$ are analytic in $|t|<1$. Obviously for a fixed $z\in \mathbb{P}\Delta(0_n;1_n)$, the following function
\begin{align}\label{Th4:1.4}
	\tilde F_{\varepsilon}(t)=F_{\varepsilon}\left(\frac{z}{||z||_{\infty}}t\right)=\tilde h(t)+\varepsilon \tilde g(t)
\end{align}
is analytic in $|t|<1$, where $\tilde h(t)$ and $\tilde g(t)$ are defined by (\ref{Th4:1.2}) and (\ref{Th4:1.3}). Also from (\ref{Th4:1.2}) and (\ref{Th4:1.3}), we see that
\begin{align}\label{Th4:1.5}
	\tilde f(t)=f\left(\frac{z}{||z||_{\infty}}t\right)=\tilde h(t)+\ol{\tilde g(t)}.
\end{align}

We define 
\begin{align}\label{Th4:1.6}
	\tilde \omega(t)=\omega\left(\frac{z}{||z||_{\infty}}t\right),
\end{align}
which is analytic in $|t|<1$. Consequently from \eqref{Th4:1.1a}, we have
\begin{align}\label{Th4:1.6a}
	|\tilde \omega(t)|\leq |t|.
\end{align}

In terms of one variable $t$, we deduce from (\ref{Th4:1.2}) that
\begin{align}\label{Th4:1.7}
	\frac{d \tilde F_{\varepsilon}(t)}{d t}=n\alpha+n(1-\alpha)\frac{1+\tilde \omega(t)}{1-\tilde \omega(t)}.
\end{align}
	
Note that
\begin{align}\label{Th4:1.8}
\tilde F_{\varepsilon}(t)=\tilde F_{\varepsilon}(t)-\tilde F_{\varepsilon}(0)=\int\limits_{0}^{t} \frac{d \tilde F_{\varepsilon}(s)}{d s} ds.
\end{align}

Now using  (\ref{Th4:1.6a}) and \eqref{Th4:1.7} into (\ref{Th4:1.8}), we obtain
\begin{align*}
\left|\tilde F_{\varepsilon}\left(\|z\|_{\infty}\right)\right|=
\left|\int\limits_{0}^{\|z\|_{\infty}}\frac{d \tilde F_{\varepsilon}(s)}{d s}ds\right|
\leq & 
\int\limits_{0}^{\|z\|_{\infty}}\left(n\alpha+n(1-\alpha)\left|\frac{1+\tilde \omega(\xi e^{i\theta})}{1-\tilde \omega(\xi e^{i\theta})}\right|\right)d \xi\\\leq & 
\int\limits_{0}^{\|z\|_{\infty}}\left(n\alpha+n(1-\alpha)\frac{1+\xi}{1-\xi}\right)d \xi
\nonumber\\ =&
\int\limits_{0}^{\|z\|_{\infty}} \left(n+2n(1-\alpha)\left(\xi+\xi^2+\xi^3+\ldots\right)\right)d\xi\nonumber\\= &
n\|z\|_{\infty}+2n(1-\alpha)\sum\limits_{m=2}^{\infty}
\frac{\|z\|_{\infty}^m}{m}\nonumber
\end{align*}
and so from \eqref{Th4:1.4}, we get
\begin{align}\label{Th4:1.9}
|F_{\varepsilon}(z)|\leq n\|z\|_{\infty}+2n(1-\alpha)\sum\limits_{m=2}^{\infty}
\frac{\|z\|_{\infty}^m}{m},
\end{align}
which also holds for $z=0$.

Again using (\ref{Th4:1.6a}) and \eqref{Th4:1.7} into (\ref{Th4:1.8}), we obtain
\begin{align*}
	\left|\tilde F_{\varepsilon}\left(\|z\|_{\infty}\right)\right|
	\geq & 
	\int\limits_{0}^{\|z\|_{\infty}}\left(n\alpha+n(1-\alpha)\Re\left(\frac{1+\tilde \omega(\xi e^{i\theta})}{1-\tilde \omega(\xi e^{i\theta})}\right)\right)d \xi\\\geq & 
	\int\limits_{0}^{\|z\|_{\infty}}\left(n\alpha+n(1-\alpha)\frac{1-\xi}{1+\xi}\right)d \xi
	\nonumber\\ =&
	\int\limits_{0}^{\|z\|_{\infty}} \left(n+2n(1-\alpha)\left(-\xi+\xi^2-\xi^3+\ldots\right)\right)d\xi\nonumber\\= &
	n\|z\|_{\infty}+2n(1-\alpha)\sum\limits_{m=2}^{\infty}
	(-1)^{m-1}\frac{\|z\|_{\infty}^m}{m}\nonumber
\end{align*}
and so from \eqref{Th4:1.4}, we get
\begin{align}\label{Th4:1.10}
	|F_{\varepsilon}(z)|\geq n\|z\|_{\infty}+2n(1-\alpha)\sum\limits_{m=2}^{\infty}
	(-1)^{m-1}\frac{\|z\|_{\infty}^m}{m},
\end{align}
which also holds for $z=0$.

Since $|\sum_{j=1}^nz_j|\leq n\|z\|_{\infty}$, from \eqref{Th4:1.9} and \eqref{Th4:1.10}, we get
\begin{align*}
\left|\sum\limits_{j=1}^nz_j\right|+2n(1-\alpha)\sum\limits_{m=2}^{\infty}
(-1)^{m-1}\frac{\|z\|_{\infty}^m}{m}\leq |F_{\varepsilon}(z)|\leq n\|z\|_{\infty}+2n(1-\alpha)\sum\limits_{m=2}^{\infty}
\frac{\|z\|_{\infty}^m}{m}.
\end{align*}

Finally since $\varepsilon\; (|\varepsilon|=1)$ is arbitrary and $F_{\varepsilon}(z)=h(z)+\varepsilon g(z)$, for each $0\leq \alpha<1$, we have
\begin{align*}
	\left|\sum\limits_{j=1}^nz_j\right|+2n(1-\alpha)\sum\limits_{m=2}^{\infty}
	(-1)^{m-1}\frac{\|z\|_{\infty}^m}{m}\leq |f(z)|\leq n\|z\|_{\infty}+2n(1-\alpha)\sum\limits_{m=2}^{\infty}
	\frac{\|z\|_{\infty}^m}{m}.
\end{align*}

Thus the desired inequalities are established.\vspace{2mm}
	
To show the left inequality is sharp, we consider the function $f_3(z)$ defined by
	\begin{align*}
		f_3(z)=\sum\limits_{j=1}^n z_j+\sum\limits_{m=2}^{\infty}\sum\limits_{|\beta|=m}a_{\beta}z^{\beta},
	\end{align*}
	where
	\begin{align*}
		a_{\beta}=(-1)^{|\beta|-1}\frac{2n(1-\alpha)}{\binom{m+n-1}{n-1}m}
	\end{align*}
	for $\beta=(\beta_1,\ldots,\beta_n)$ such that $|\beta|=m\geq 2$. It is easy to verify that $f_3\in \mathscr{P}_{\mathcal{H}_n^0}(\alpha)$. 
	
	Now for the point $z=(r,r,\ldots,r)$, where $0<r<1$, we find that
	\begin{align*}
		|f_3(z)|=&nr+2n(1-\alpha)\sum\limits_{m=2}^{\infty}(-1)^{m-1}\frac{r^m}{\binom{m+n-1}{n-1}m}\sum\limits_{|\beta|=m} 1\\=&
		nr+2n(1-\alpha)\sum\limits_{m=2}^{\infty}\frac{(-1)^{m-1}}{m}r^m\\=&
		\left|\sum\limits_{j=1}^nz_j\right|+2n(1-\alpha)\sum\limits_{m=2}^{\infty}(-1)^{m-1}\frac{\|z\|_{\infty}^{m}}{m}.
	\end{align*}
	Therefore the left inequality is sharp.\vspace{1.2mm}
	
	Next to show the right inequality is sharp, we consider the function $f_4(z)$ defined by
	\begin{align*}
		f_4(z)=\sum\limits_{j=1}^n z_j+\sum\limits_{m=2}^{\infty}\sum\limits_{|\beta|=m}b_{\beta}z^{\beta},
	\end{align*}
	where
	\begin{align*}
		b_{\beta}=\frac{2n(1-\alpha)}{\binom{m+n-2}{n-1}m}
	\end{align*}
	for $\beta=(\beta_1,\ldots,\beta_n)$ such that $|\beta|=m\geq 2$. It is easy to verify that $f_4\in \mathscr{P}_{\mathcal{H}_n^0}(\alpha)$.
	
	For the point $z=(r,r,\ldots,r)$, where $r<1$, we have
	\begin{align*}
		|f_4(z)|=n\|z\|_{\infty}+2n(1-\alpha)\sum\limits_{m=2}^{\infty}\frac{||z||_{\infty}^m}{m},
	\end{align*}
	which shows that the right inequality is sharp. 
\end{proof}

\section{\bf{Bohr Radius for the class $\mathscr{P}_{\mathcal{H}_n^0}(\alpha)$}}
Using Theorems \ref{Th-1.3} and \ref{Th-1.4}, we obtain the Bohr radius for the class $\mathscr{P}_{\mathcal{H}_n^0}(\alpha)$.

\begin{theo}\label{Th-1.5} Let $f=h+\ol g\in \mathscr{P}_{\mathcal{H}_n^0}(\alpha)$ be given by \emph{(\ref{Eq 1.7})} with $0<\alpha\leq 1$. Then the inequality \eqref{Eq 1.8} holds for all $z=(z_1,z_2,\ldots,z_n)\in \mathbb{P} \Delta(0;1/n)$ such that $\|z\|_{\infty}={\bf{r}}\leq {\bf{r}_f}$ and $n{\bf{r}}$ is the unique positive root of 
\begin{align*}
 n{\bf{r}}+2n(1-\alpha)\sum\limits_{m=2}^{\infty}\frac{(n{\bf{r}})^m}{m}=1+2n(1-\alpha)\sum\limits_{m=2}^{\infty}\frac{(-1)^{m-1}}{mn^m}
 \end{align*}
in $(0,1)$. The radius ${\bf{r}_f}$ is the best possible. 
\end{theo}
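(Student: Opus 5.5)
The plan follows the one-variable argument of Theorem E, with Theorems \ref{Th-1.3} and \ref{Th-1.4} replacing Theorems B and C. There are three steps: a lower bound for the boundary distance, an upper bound for the majorant series, and a comparison of the two.

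\textbf{Step 1: lower bound for the distance.} Since $f(0)=0$, we have $d\left(f(0),\partial f(\mathbb{P}\Delta(0;1/n))\right)=\liminf|f(z)|$ as $z$ tends to the boundary. I will take the left inequality of Theorem \ref{Th-1.4}. On the boundary of the shrunken polydisc, $\|z\|_{\infty}=1/n$. I will also use that $\left|\sum_j z_j\right|$ may be bounded below along approach to the boundary, and I would normalise this to $1$ (this matches the left side of the defining equation). This gives
\begin{align*}
d\bigl(f(0),\partial f\bigr)\ \ge\ 1+2n(1-\alpha)\sum_{m=2}^{\infty}\frac{(-1)^{m-1}}{m\,n^m}.
\end{align*}
Justifying the normalisation of the linear term to $1$ is the step I expect to be the main obstacle. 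The quantity $\left|\sum_j z_j\right|$ can vanish on parts of the boundary, so some choice of reference point or direction is needed. I would first try restricting to the diagonal direction $z=(t,\dots,t)$, where $\left|\sum_j z_j\right|=n|t|=1$ at $|t|=1/n$, and argue that this is the configuration defining the radius.

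\textbf{Step 2: upper bound for the majorant series.} I will bound each coefficient group in \eqref{Eq 1.8} using Theorem \ref{Th-1.3}(i). Combining it with $r^{\beta}\le \mathbf{r}^m$ and the counting of monomials of degree $m$, I would aim for
\begin{align*}
M_f(n\mathbf{r})\ \le\ n\mathbf{r}+2n(1-\alpha)\sum_{m=2}^{\infty}\frac{(n\mathbf{r})^m}{m}.
\end{align*}
The binomial factor $\binom{m+n-1}{n-1}$ has to be absorbed here, by grouping the sum $\sum_{|\beta|=m}r^{\beta}$ against the coefficient sums. I would check that the resulting power of $n$ is exactly $(n\mathbf{r})^m$; if a surplus factor appears, I would compensate by rescaling into the polydisc of radius $1/n$, as in the statement.

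\textbf{Step 3: the radius and sharpness.} Define
\begin{align*}
\Phi(x)=x+2n(1-\alpha)\sum_{m\ge2}\frac{x^m}{m}.
\end{align*}
This function is continuous and strictly increasing on $[0,1)$, with $\Phi(0)=0$ and $\Phi(x)\to\infty$ as $x\to1^-$. The right side of the defining equation is a positive constant, since the alternating series is dominated by its first term. Hence the equation has a unique root $x_0=n\mathbf{r}_f$ in $(0,1)$, and $\Phi(n\mathbf{r})\le$ that constant exactly when $\mathbf{r}\le\mathbf{r}_f$. By Steps 1 and 2 this gives \eqref{Eq 1.8}.

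For sharpness, I will use the extremal function $f_2$ from \eqref{Ex1}, or the alternating-sign $f_3$, whichever attains equality both in the coefficient bound and in the growth lower bound along the diagonal $z=(r,\dots,r)$. For such a function the majorant series equals $\Phi(n\mathbf{r})$, and the distance equals the constant above. Therefore, for $\mathbf{r}>\mathbf{r}_f$, the inequality \eqref{Eq 1.8} fails.
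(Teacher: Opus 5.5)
Your outline follows the paper's three steps, but the points you flag are real gaps. The paper's proof does not close them either: its \eqref{Th5:1.2}--\eqref{Th5:1.3} make exactly the normalisation you are worried about. Write $C$ for the right-hand side of the defining equation.

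\textbf{Step 1.} For $n\ge 2$ the identity $d(f(0),\partial f)=\liminf|f(z)|$ fails, and $\left|\sum_j z_j\right|$ cannot be normalised to $1$. Indeed, $f(z)=\sum_j z_j$ lies in $\mathscr{P}_{\mathcal{H}_n^0}(\alpha)$ and vanishes at the boundary point $(1/n,-1/n,0,\ldots,0)$, yet it maps $\mathbb{P}\Delta(0;1/n)$ onto $\mathbb{D}$, so $d=1$. The missing idea is that the image of the polydisc contains the image of any slice. Set $F(t)=\frac1n f(t,\ldots,t)=F_h(t)+\overline{F_g(t)}$. Then
\[
\Re\bigl(F_h'(t)-\alpha\bigr)=\frac1n\sum_{k=1}^n\Re\Bigl(\frac{\partial h}{\partial z_k}(t,\ldots,t)-\alpha\Bigr)>\frac1n\sum_{k=1}^n\Bigl|\frac{\partial g}{\partial z_k}(t,\ldots,t)\Bigr|\ge|F_g'(t)|,
\]
so $F\in\mathscr{P}_{\mathcal{H}}^{0}(\alpha)$. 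Theorem C then gives $|f(t,\ldots,t)|\ge n|t|+2n(1-\alpha)\sum_{m\ge2}(-1)^{m-1}|t|^m/m$, and the right side tends to $C$ as $|t|\to1/n$. Since $F$ is univalent (openness suffices), every boundary point of the slice image is a limit of $f(t,\ldots,t)$ with $|t|\to 1/n$. Hence $\{|w|<C\}\subset f(\{(t,\ldots,t):|t|<1/n\})\subset f(\mathbb{P}\Delta(0;1/n))$, and $d\ge C$. Calling the diagonal ``the configuration defining the radius'' does not replace this containment.

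\textbf{Step 2.} If you combine Theorem \ref{Th-1.3}(i) with the count $\sum_{|\beta|=m}r^\beta\le\binom{m+n-1}{n-1}\mathbf r^m$, you pay the binomial twice. The resulting requirement $\binom{m+n-1}{n-1}^2\le n^m$ fails already at $n=m=2$. Rescaling cannot help, because the radius $1/n$ is already built in. The paper bounds $r^\beta\le\mathbf r^m$ once per degree and uses $\binom{m+n-1}{n-1}\le n^m$. A cleaner route: Theorem B applied to $F$ gives $\left|\sum_{|\beta|=m}a_\beta\right|+\left|\sum_{|\beta|=m}b_\beta\right|\le 2n(1-\alpha)/m$. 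Together with $\sum_{|\beta|=m}r^\beta\le(r_1+\cdots+r_n)^m\le(n\mathbf r)^m$, this yields $M_f\le\Phi(n\mathbf r)$ however the $r^\beta$ in \eqref{Eq 1.8} is read.

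\textbf{Step 3} cannot be completed, because for $n\ge2$ the best-possible assertion is false. Your $f_2$ is not admissible: its slice $F$ has $m$-th coefficient $2(1-\alpha)\binom{m+n-1}{n-1}/m>2(1-\alpha)/m$, contradicting Theorem B(iii). Your $f_3$ is extremal for the diagonal growth bound, but not for the majorant. In fact the slice bound shows that every admissible $f$ has majorant at most
\[
\Psi(\mathbf r)=n\mathbf r+2n(1-\alpha)\sum_{m\ge2}\binom{m+n-1}{n-1}\frac{\mathbf r^m}{m}.
\]
For $n\ge2$ we have $\Psi(\mathbf r)<\Phi(n\mathbf r)$, since $\binom{m+n-1}{n-1}<n^m$ for $m\ge2$. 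Because $d\ge C$ for every $f$, this gives $\Psi(\mathbf r_f)<C$. So \eqref{Eq 1.8} still holds for all admissible $f$ on $\mathbf r_f<\mathbf r\le\mathbf r_f+\delta$ for some $\delta>0$. For example, with $n=2$ and $\alpha=0$ one has $\mathbf r_f=1/6$, while $\Psi(\mathbf r)\le C$ up to about $0.186$. Hence no extremal function exists. The paper's $f_5$, whose degree-$m$ coefficient sum is $2n(1-\alpha)n^m/m$, violates even Theorem \ref{Th-1.3}(iii). With the repairs above, your argument proves the inequality for $\mathbf r\le\mathbf r_f$. The sharpness claim holds only for $n=1$, where it is Theorem E.
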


\begin{proof}
Let $f\in \mathscr{P}_{\mathcal{H}_n^0}(\alpha)$. 
Then by Theorem \ref{Th-1.4}, we have 
\begin{align}\label{Th5:1.1}
|f(z)|\geq \left|\sum\limits_{j=1}^nz_j\right|+2n(1-\alpha)\sum\limits_{m=2}^{\infty}
(-1)^{m-1}\frac{\|z\|_{\infty}^m}{m}
\end{align}
for all $z\in \mathbb{P}\Delta(0;1/n)$.
By taking $\liminf$ as $(|z_1|,|z_2|,\ldots,|z_n|)\to (1/n,1/n,\ldots,1/n)$ on both sides of \eqref{Th5:1.1}, we obtain
\begin{align}\label{Th5:1.2}
\liminf\limits_{(|z_1|,|z_2|,\ldots,|z_n|)\to (1/n,1/n,\ldots,1/n)}|f(z)|\geq 1+2n(1-\alpha)\sum\limits_{m=2}^{\infty}\frac{(-1)^{m-1}}{m n^m}
\end{align}

The Euclidean distance between $f(0)$ and the boundary of $f(\mathbb{P}\Delta(0;1/n))$ is given by
\begin{align}\label{Th5:1.3}
	d\left(f(0),\partial f(\mathbb{P}\Delta(0;1/n))\right)=\liminf\limits_{(|z_1|,|z_2|,\ldots,|z_n|)\to (1/n,1/n,\ldots,1/n)}|f(z)-f(0)|.
\end{align}
Since $f(0)=0$, from (\ref{Th5:1.2}) and (\ref{Th5:1.3}), we get
 \begin{align}\label{Th5:1.4}
 d\left(f(0),\partial f(\mathbb{P}\Delta(0;1/n))\right)\geq 1+2n(1-\alpha)\sum\limits_{m=2}^{\infty}\frac{(-1)^{m-1}}{m n^m}.
 \end{align}
 
 Let $H_1:[0,1)\to \mathbb{R}$ be defined by
 \begin{align*}
 H_1(n{\bf{r}})=&n{\bf{r}}+2n(1-\alpha)\sum\limits_{m=2}^{\infty}\frac{(n{\bf{r}})^m}{m}-1-2n(1-\alpha)\sum\limits_{m=2}^{\infty}\frac{(-1)^{m-1}}{mn^m}\\=&n{\bf{r}}-2n(1-\alpha)(n{\bf{r}}+\log (1-n{\bf{r}}))-1-2n(1-\alpha)\sum\limits_{m=2}^{\infty}\frac{(-1)^{m-1}}{mn^m}.
 \end{align*}
 
 Note that
 \begin{align*}
 	H_1(0)=-1-2n(1-\alpha)\sum\limits_{m=2}^{\infty}\frac{(-1)^{m-1}}{mn^m}=-1-2n(1-\alpha)(\log (1+1/n)-1/n)<0
 \end{align*}
 for all $\alpha\in [0,1)$ and. On the other hand, we see that $H_1(n{\bf{r}})\to +\infty$ as $n{\bf{r}}\to 1$ and 
 \begin{align*}
 \frac{d H_1(n{\bf{r}})}{d n{\bf{r}}}=1+2n(1-\alpha)\frac{n{\bf{r}}}{1-n{\bf{r}}}>0
 \end{align*}
 for $n{\bf{r}}\in (0,1)$. 
 
 This shows that $H_1(n{\bf{r}})$ is strictly increasing on $(0,1)$.
 \begin{figure}[H]
 	\centering
 	\begin{tikzpicture}
 		\begin{axis}[
 			width=12cm, height=7.6cm,                 
 			axis lines=middle,
 			xlabel={$n\mathbf{r}$},
 			ylabel={$H_1(n\mathbf{r})$},
 			xmin=0, xmax=1.03,
 			ymin=-.75, ymax=1.70,
 			samples=180,
 			domain=0:0.98,
 			very thick,
 			axis line style={-latex, gray!70, line width=1.7pt},
 		tick label style={font=\footnotesize},
 			label style={font=\bfseries\normalsize},
 			grid=major,
 			grid style={gray!20, dashed},
 			minor grid style={gray!10},
 			axis background/.style={
 				shading=axis,
 				top color=blue!4,
 				bottom color=green!4
 			},
 			legend style={
 				draw=gray!50,
 				fill=white,
 				rounded corners=3pt,
 				%drop shadow,
 				font=\small\bfseries,
 				at={(0.97,0.20)},
 				anchor=north east,
 				row sep=2pt
 			}
 			]

 			\addplot[
 			curveblue,
 			opacity=0.15,
 			line width=1.5pt,
 			line cap=round,
 			domain=0:0.98,
 			samples=180,
 			forget plot]
 			{x-0.4*(x+ln(1-x))-0.6};

 			\addplot[name path=zeroline, draw=none, domain=0:0.98, forget plot] {0};

 			\addplot[
 			name path=Hcurve,
 			curveblue!85!black,
 			line width=1.5pt,
 			line cap=round,
 			line join=round,
 			domain=0:0.98,
 			samples=180]
 			{x-0.4*(x+ln(1-x))-0.6};
 			\addlegendentry{$H_1(n\mathbf r)$}

 	\addplot[
 	white,
 	opacity=0.15,
 	line width=0.35pt,
 	domain=0:0.98,
 	samples=180,
 	forget plot]
 	{x-0.4*(x+ln(1-x))-0.6};

 			\addplot[negred!35!red, opacity=0.15, forget plot]
 			fill between[of=Hcurve and zeroline, soft clip={domain=0:0.5161}];
 			\addplot[negred, opacity=0.12, forget plot]
 			fill between[of=Hcurve and zeroline, soft clip={domain=0:0.5161}];

 			\addplot[posgreen!35!green, opacity=0.28, forget plot]
 			fill between[of=Hcurve and zeroline, soft clip={domain=0.5161:0.98}];
 			\addplot[posgreen, opacity=0.12, forget plot]
 			fill between[of=Hcurve and zeroline, soft clip={domain=0.5161:0.98}];

 			\addplot[dashed, red, thick, line width=1.5pt, forget plot]
 			coordinates {(0.5161,-1.2) (0.5161,0)};

 			\addplot[only marks, mark=*, mark size=5.5, forget plot,
 			mark options={fill=white, draw=rootdark, line width=1.6pt}]
 			coordinates {(0.5161,0)};
 			\addplot[only marks, mark=*, mark size=3.2, forget plot,
 			mark options={fill=rootdark, draw=white, line width=1pt}]
 			coordinates {(0.5161,0)};

 			\node[fill=rootdark!8, draw=rootdark, rounded corners=4pt, thick,
 			drop shadow, font=\bfseries, inner sep=4pt]
 			at (axis cs:0.50,0.45) {$n\mathbf{r}_f\approx 0.5161$};

 			\node[
 			text=black,
 			fill=negred!40,
 			draw=negred!90!black,
 			rounded corners=2pt,
 			thick,
 			font=\bfseries,
 			inner sep=3pt
 			]
 			at (axis cs:0.24,-0.60) {$\mathbf{H_1<0}$};
 			\node[ fill=posgreen!20, rounded corners=2pt, draw=posgreen!60,
 			font=\itshape\bfseries, inner sep=3pt]
 			at (axis cs:0.85,1.4) {$H_1>0$};
 			
 		\end{axis}
 	\end{tikzpicture}
 	\caption{Graph of $H_1(n\mathbf r)$ illustrating its unique zero at
 		$n\mathbf r_f\approx0.5161$, which determines the sharp Bohr radius
 		$\mathbf r_f$. The shaded regions indicate $H_1<0$ (red) and $H_1>0$
 		(green).}
 	\label{fig:H1-unique-zero}
 \end{figure}
 Since $H_1(0)<0$ and $H_1(n{\bf{r}})\to +\infty$ as $n{\bf{r}}\to 1$, the monotonicity of $H_1(n{\bf{r}})$ implies that $H_1(n{\bf{r}})$ has exactly one zero in $(0,1)$. Let $n{\bf{r}}_f$ be the unique root of $H_1(n{\bf{r}})$ in $(0,1)$. Then $H_1(n{\bf{r}}_f)=0$ implies that 
 \begin{align}\label{Th5:1.5}
 n{\bf{r}}_f+2n(1-\alpha)\sum\limits_{m=2}^{\infty}\frac{(n{\bf{r}}_f)^m}{m}=1+2n(1-\alpha)\sum\limits_{m=2}^{\infty}\frac{(-1)^{m-1}}{mn^m}.
 \end{align}
   \begin{figure}[H]
 	\centering
 	\begin{tikzpicture}
 		\begin{axis}[
 			width=13cm,
 			height=9.5cm,
 			xlabel={$\alpha$},
 			ylabel={$\mathbf{r}_{_{f}}$},
 			xmin=-0.08, xmax=1.08,
 			ymin=0, ymax=0.88,
 			grid=major,
 			grid style={gray!30, line width=0.3pt},
 			thick,
 			minor tick num=1,
 			minor grid style={gray!10},
 			axis line style={
 				-{Latex[length=3mm,width=2mm]},
 				black,
 				line width=0.9pt
 			},
 			tick label style={font=\small},
 			label style={font=\small\bfseries},
 			legend style={draw=gray!40, fill=white, font=\small,
 				at={(0.02,0.98)}, anchor=north west, rounded corners}
 			]

 			\addplot[mark=*, mark size=1.5pt, c1, thick]
 			coordinates {(0,0.2852) (0.25,0.3866) (0.5,0.5000) (0.75,0.6483) (0.9,0.7868)};
 			\addlegendentry{$n=1$}

 			\addplot[mark=square*, mark size=1.5pt, c2, thick]
 			coordinates {(0,0.1667) (0.25,0.1973) (0.5,0.2370) (0.75,0.2978) (0.9,0.3651)};
 			\addlegendentry{$n=2$}

 			\addplot[mark=triangle*, mark size=1.5pt, c3, thick]
 			coordinates {(0,0.1077) (0.25,0.1243) (0.5,0.1470) (0.75,0.1844) (0.9,0.2291)};
 			\addlegendentry{$n=3$}

 			\addplot[mark=diamond*, mark size=1.5pt, c4, thick]
 			coordinates {(0,0.0768) (0.25,0.0877) (0.5,0.1033) (0.75,0.1299) (0.9,0.1634)};
 			\addlegendentry{$n=4$}

 			\node[c1, font=\tiny\bfseries] at (axis cs:0,0.2852)   [above,      yshift=3pt] {0.2852};
 			\node[c1, font=\tiny\bfseries] at (axis cs:0.25,0.3866)[above,      yshift=3pt] {0.3866};
 			\node[c1, font=\tiny\bfseries] at (axis cs:0.5,0.5000) [above,      yshift=3pt] {0.5000};
 			\node[c1, font=\tiny\bfseries] at (axis cs:0.75,0.6483)[above,      yshift=3pt] {0.6483};
 			\node[c1, font=\tiny\bfseries] at (axis cs:0.9,0.7868) [above,      yshift=3pt] {0.7868};

 			\node[c2, font=\tiny\bfseries] at (axis cs:0,0.1667)   [above,      yshift=3pt] {0.1667};
 			\node[c2, font=\tiny\bfseries] at (axis cs:0.25,0.1973)[above,      yshift=3pt] {0.1973};
 			\node[c2, font=\tiny\bfseries] at (axis cs:0.5,0.2370) [above,      yshift=3pt] {0.2370};
 			\node[c2, font=\tiny\bfseries] at (axis cs:0.75,0.2978)[above,      yshift=3pt] {0.2978};
 			\node[c2, font=\tiny\bfseries] at (axis cs:0.9,0.3651) [above,      yshift=3pt] {0.3651};

 			\node[c3, font=\tiny\bfseries, fill=white, draw=c3, rounded corners,
 			inner sep=1pt, line width=0.4pt]
 			at (axis cs:0,0.1077)    [above, yshift=4pt]  {0.1077};
 			\node[c3, font=\tiny\bfseries, fill=white, draw=c3, rounded corners,
 			inner sep=1pt, line width=0.4pt]
 			at (axis cs:0.25,0.1243) [above, yshift=6pt]  {0.1243};
 			\node[c3, font=\tiny\bfseries, fill=white, draw=c3, rounded corners,
 			inner sep=1pt, line width=0.4pt]
 			at (axis cs:0.5,0.1470)  [above, yshift=6pt]  {0.1470};
 			\node[c3, font=\tiny\bfseries, fill=white, draw=c3, rounded corners,
 			inner sep=1pt, line width=0.4pt]
 			at (axis cs:0.75,0.1844) [above, yshift=6pt]  {0.1844};
 			\node[c3, font=\tiny\bfseries, fill=white, draw=c3, rounded corners,
 			inner sep=1pt, line width=0.4pt]
 			at (axis cs:0.9,0.2291)  [above, yshift=6pt]  {0.2291};

 			\node[c4, font=\tiny\bfseries, fill=white, draw=c4, rounded corners,
 			inner sep=1pt, line width=0.4pt]
 			at (axis cs:0,0.0768)    [below, yshift=-6pt] {0.0768};
 			\node[c4, font=\tiny\bfseries, fill=white, draw=c4, rounded corners,
 			inner sep=1pt, line width=0.4pt]
 			at (axis cs:0.25,0.0877) [below, yshift=-6pt] {0.0877};
 			\node[c4, font=\tiny\bfseries, fill=white, draw=c4, rounded corners,
 			inner sep=1pt, line width=0.4pt]
 			at (axis cs:0.5,0.1033)  [below, yshift=-6pt] {0.1033};
 			\node[c4, font=\tiny\bfseries, fill=white, draw=c4, rounded corners,
 			inner sep=1pt, line width=0.4pt]
 			at (axis cs:0.75,0.1299) [below, yshift=-6pt] {0.1299};
 			\node[c4, font=\tiny\bfseries, fill=white, draw=c4, rounded corners,
 			inner sep=1pt, line width=0.4pt]
 			at (axis cs:0.9,0.1634)  [below, yshift=-6pt] {0.1634};
 			
 		\end{axis}
 	\end{tikzpicture}
 	\caption{The sharp Bohr radius $\mathbf{r}_f$ of Theorem~\ref{Th-1.5}, obtained as the unique solution of \eqref{Th5:1.5}, plotted against $\alpha$ for $n=1,2,3,4$. The radius increases with $\alpha$ and decreases with $n$.}
 	\label{fig:rf-vs-alpha}
 \end{figure}
 
 For $0\leq n{\bf{r}}\leq n{\bf{r}}_f$, we deduce from (\ref{Th5:1.5}) that 
 \begin{align}\label{Th5:1.6}
 n{\bf{r}}+2n(1-\alpha)\sum\limits_{m=2}^{\infty}\frac{(n{\bf{r}})^m}{m}\leq&\; n{\bf{r}}_f+2n(1-\alpha)\sum\limits_{m=2}^{\infty}\frac{(n{\bf{r}}_f)^m}{m}\\=&1+2n(1-\alpha)\sum\limits_{m=2}^{\infty}\frac{(-1)^{m-1}}{mn^m}.\nonumber
 \end{align}
 
 Let $z\in \mathbb{P}\Delta(0;1/n)$ such that $r=(|z_1|,|z_2|,\ldots,|z_n|)=(r_1,r_2,\ldots,r_n)$ and ${\bf{r}}=\|z\|_{\infty}$. 
Using Theorem \ref{Th-1.3} and the inequalities \eqref{Th5:1.4} and \eqref{Th5:1.6}, for $0\le n\|z\|_{\infty}=n{\bf{r}}\leq n{\bf{r}}_f$, we deduce that 
 \begin{align*}
 \left|\sum\limits_{j=1}^n z_j\right|+\sum\limits_{m=2}^{\infty}\left(\left|\sum\limits_{|\beta|=m}a_{\beta}\right|+\left|\sum\limits_{|\beta|=m}b_{\beta}\right|\right)r^{\beta}\leq&
 n\|z\|_{\infty}+2n(1-\alpha)\sum\limits_{m=2}^{\infty}\binom{m+n-1}{n-1}\frac{\|z\|_{\infty}^m}{m}\\\leq&
 n{\bf{r}}+2n(1-\alpha)\sum\limits_{m=2}^{\infty}\frac{(n{\bf{r}})^m}{m}\\\leq&
 1+2n(1-\alpha)\sum\limits_{m=2}^{\infty}\frac{(-1)^{m-1}}{mn^m}\\=& d\left(f(0),\partial f(\mathbb{P}\Delta(0;1/n))\right).
 \end{align*}
 
\begin{table}[H]
	\centering
	\renewcommand{\arraystretch}{1.35}
	\setlength{\tabcolsep}{8pt}
	\begin{tabular}{|p{4.2cm}|p{8.3cm}|}
		\hline
		\textbf{Step of the proof} &
		\textbf{Mathematical conclusion}\\
		\hline
		Coefficient estimate
		&
		Theorem~\ref{Th-1.3} provides an upper bound for the coefficient sum
		$\displaystyle
		\sum_{|\beta|=m}\bigl(|a_\beta|+|b_\beta|\bigr).
		$
		\\
		\hline
		Growth estimate
		&
		Theorem~\ref{Th-1.4} yields the lower bound for
		$|f(z)|$, leading to the estimate
		(\ref{Th5:1.4}) for
		$d\!\left(f(0),\partial f(\mathbb P\Delta(0;1/n))\right)$.
		\\
		\hline
		Auxiliary function
		&
		The function
		$H_1(n{\bf r})$
		is introduced and proved to be strictly increasing on
		$(0,1)$, ensuring the existence of a unique positive root.
		\\
		\hline
		Bohr radius
		&
		The unique root
		${\bf r}_f$
		is characterized by
		(\ref{Th5:1.5}).
		\\
		\hline
		Bohr inequality
		&
		Equation
		(\ref{Th5:1.6})
		establishes the desired Bohr inequality for
		$0\le n{\bf r}\le n{\bf r}_f$.
		\\
		\hline
		Sharpness
		&
		The extremal mapping
		$f_5$
		shows that the radius
		${\bf r}_f$
		cannot be improved.
		\\
		\hline
	\end{tabular}
	\caption{Logical progression of the proof of Theorem~\ref{Th-1.5}.}
	\label{tab:proof-outline-th15}
\end{table}
 To show that the constant ${\bf{r}}_f$ is the best possible, we consider the function $f_5(z)$ defined by 
 \begin{align*}
 	f_5(z)=\sum\limits_{j=1}^n z_j+\sum\limits_{m=2}^{\infty}\sum\limits_{|\beta|=m}a_{\beta}z^{\beta},\;\;
 	a_{\beta}=\frac{2n(1-\alpha)n^{\beta}}{\binom{m+n-1}{n-1}m}
 \end{align*}
 for $\beta=(\beta_1,\ldots,\beta_n)$ such that $|\beta|=m\geq 2$.

 Let $z=(r,r,\ldots,r)$, where $nr<1$. Now using \eqref{Th5:1.5}, we deduce that

 \begin{align*}
 	&\left|\sum\limits_{j=1}^n z_j\right|+\sum\limits_{m=2}^{\infty}\left(\left|\sum\limits_{|\beta|=m}a_{\beta}\right|+\left|\sum\limits_{|\beta|=m}b_{\beta}\right|\right)r^{\beta} =
 	n\|z\|_{\infty}+2n(1-\alpha)\sum\limits_{m=2}^{\infty}\frac{(n\|z\|_{\infty})^m}{m}\\=&
 	n{\bf{r}}_f+2n(1-\alpha)\sum\limits_{m=2}^{\infty}\frac{(n{\bf{r}}_f)^m}{m}=
 	1+2n(1-\alpha)\sum\limits_{m=2}^{\infty}\frac{(-1)^{m-1}}{mn^m}\\=& d\left(f(0),\partial f(\mathbb{P}\Delta(0;1/n))\right)
 \end{align*}
 and hence the radius ${\bf{r}}_f$ is the best possible.
\end{proof}

\section{{\bf Partial sums of functions in $\mathscr{P}_{\mathcal{H}_n^0}(\alpha)$}}\label{Sec-4}
Let 
\begin{align*}
f(z)=\sum\limits_{j=1}^n z_j+\sum\limits_{m=2}^{\infty}\sum\limits_{|\beta|=m}a_{\beta}z^{\beta}
\end{align*}
for $z\in \mathbb{P}\Delta(0_n;1_n)$. Then the $p$-th partial sum (or section) of $f(z)$ is defined by
\begin{align*}
S_p(f(z))=\sum\limits_{j=1}^n z_j+\sum\limits_{m=2}^{p}\sum\limits_{|\beta|=m}a_{\beta}z^{\beta}
\end{align*}
for $z\in \mathbb{P}\Delta(0_n;1_n)$ and $p\geq 2$. Analogously in the pluriharmonic case, the $p, q$-th partial sum (or section) of a harmonic function $f=h + \ol g$ given by (\ref{Eq 1.7}) is defined by
\begin{align*}
S_{p,q}(f)=S_p(h)+\ol{S_q(g)},
\end{align*}
where
\begin{align*}
S_p(h(z))=\sum\limits_{j=1}^n z_j+\sum\limits_{m=2}^{p}\sum\limits_{|\beta|=m}a_{\beta}z^{\beta}\;\;\text{and}\;\;S_q(g(z))=\sum\limits_{m=2}^{q}\sum\limits_{|\beta|=m}b_{\beta}z^{\beta}
\end{align*}
for $z\in \mathbb{P}\Delta(0_n;1_n)$ and $p,q\geq 2$.\vspace{1.2mm}

In 1928, Szeg\"{o} \cite{Szego-1928} proved the remarkable result that every section (or partial sum)
 $S_p(f)$ of a function $f\in\mathcal{A}$ is univalent in the disk $|z|<1/4$. The radius $\frac{1}{4}$ is sharp (best possible), as can be seen by examining the second partial sum of the Koebe function: $k(z)= z/(1-z)^2$.\vspace{1.2mm}

The study of sections (or partial sums) of harmonic mappings is primarily motivated by the utility of approximating real-valued harmonic functions with harmonic polynomials (see \cite{Walsh-1929}). Such approximations offer several key advantages. For instance, by the maximum principle, a harmonic function attains its extreme values on the boundary of its domain.

Since any planar harmonic mapping $f = h + \overline{g}$ defined on the unit disk $\mathbb{D}$ admits a canonical power series representation, its sections serve as natural approximations by complex-valued harmonic polynomials. Consequently, approximating univalent harmonic mappings with univalent harmonic polynomials can yield new theoretical insights and practical applications, particularly in modeling physical phenomena in fluid dynamics and flow theory. \vspace{1.2mm}

\begin{table}[H]
	\centering
	\renewcommand{\arraystretch}{1.25}
	\begin{tabular}{|c|c|}
		\hline
		Section & Definition\\
		\hline
		$S_p(h)$
		&
		$\displaystyle
		\sum_{k=1}^{n}z_k+
		\sum_{m=2}^{p}
		\sum_{|\beta|=m}
		a_\beta z^\beta$
		\\
		\hline
		$S_q(g)$
		&
		$\displaystyle
		\sum_{m=2}^{q}
		\sum_{|\beta|=m}
		b_\beta z^\beta$
		\\
		\hline
		$S_{p,q}(f)$
		&
		$S_p(h)+\overline{S_q(g)}$
		\\
		\hline
	\end{tabular}
	\caption{Partial sums used throughout this section.}
\end{table}

In $2013$, Li and Ponnusamy \cite{Li-Ponnusamy-2013} discussed the sections of functions in the class $\mathscr{P}^{0}_{\mathcal{H}}$
and in 2015, Ponnusamy et al. \cite{Li-Ponnusamy-2015} also investigated properties of the sections of stable harmonic convex functions (see also \cite{Ponnusamy-Kaliraj-Starkov-2017}). For several other interesting results concerning sections of analytic functions, we refer the reader to \cite{Obradovic-Ponnusamy-2013, Obradovic-Ponnusamy-2014, Ponnusamy-Sahoo-Yanagihara-2014,Silverman-1988, Singh-1970}.\vspace{1.2mm}

Regarding sections of harmonic mappings, Li and Ponnusamy
\cite{Li-Ponnusamy-2013a} obtained the following result.

\begin{theoF}\emph{\cite[Theorem 4]{Li-Ponnusamy-2013a}}
 For every $q\ge2$, $S_{1,q}(f)\in \mathscr{P}_{\mathcal{H}}^0$ for $|z|< \frac{1}{2}$.
\end{theoF}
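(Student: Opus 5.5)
The plan is to verify the defining inequality of $\mathscr{P}_{\mathcal{H}}^{0}$ directly for the section, using only the coefficient bound of Theorem A with $\alpha=0$. Here $f=h+\overline{g}\in\mathscr{P}_{\mathcal{H}}^{0}$ has the form \eqref{eq.1}. The section is
\[
S_{1,q}(f)(z)=S_1(h)(z)+\overline{S_q(g)(z)}=z+\overline{\sum_{m=2}^{q}b_m z^m}.
\]
So its holomorphic part is $H(z)=z$ and its co-holomorphic part is $G(z)=\sum_{m=2}^{q}b_mz^m$.

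The normalizations are immediate: $H(0)=G(0)=0$, $H'(0)=1$ and $G'(0)=0$, so $S_{1,q}(f)$ is a normalized harmonic mapping with vanishing $G'(0)$. Hence the only thing to check is the inequality $\Re H'(z)>|G'(z)|$ on the disk $|z|<\tfrac12$. Since $H'\equiv1$, this reduces to showing
\[
|G'(z)|=\Bigl|\sum_{m=2}^{q}m\,b_m z^{m-1}\Bigr|<1,\qquad |z|<\tfrac12 .
\]

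By Theorem A with $\alpha=0$ we have $|b_m|\le 1/m$ for every $m\ge2$, which gives $m|b_m|\le1$. Therefore, for $|z|=r<\tfrac12$,
\[
|G'(z)|\le\sum_{m=2}^{q}r^{m-1}<\sum_{k=1}^{\infty}r^{k}=\frac{r}{1-r}<1 .
\]
The last inequality holds precisely because $r<\tfrac12$. The inequality is strict even at the endpoint of the finite sum, because the sum is truncated at $q$.

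This finishes the argument, so I expect no genuine obstacle. The only point needing care is the interpretation of ``$S_{1,q}(f)\in\mathscr{P}_{\mathcal{H}}^0$ for $|z|<\frac12$''. I read it as saying that the defining inequality holds on the disk $|z|<\tfrac12$, or equivalently that the dilation $2S_{1,q}(f)(z/2)$ lies in $\mathscr{P}_{\mathcal{H}}^0$. The estimate above covers both readings: under the dilation $G$ becomes $2G(z/2)$, whose derivative is $G'(z/2)$, and the estimate controls exactly this quantity.
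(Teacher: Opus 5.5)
Your proof is correct and follows the paper's approach. The paper only cites Theorem F, but its proof of Theorem \ref{Th7-1} with $n=1$ is exactly your argument: $\Re\,S_1(h)'\equiv 1$, and the coefficient bound $m|b_m|\le 1-\alpha$ from Theorem A reduces $|S_q(g)'(z)|$ to a truncated geometric series that stays below $1$ for $|z|<\tfrac12$.
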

 
In this section, we introduce the sections of pluriharmonic mappings in the unit polydisk $\mathbb{P}\Delta(0;1_n)$ and obtain the following theorem.
\begin{table}[H]
	\centering
	\renewcommand{\arraystretch}{1.25}
	\caption{Comparison of partial-sum radii.}
	\label{tab:sections-comparison}
	\begin{tabular}{|l|c|c|}
		\hline
		Result & Radius & Setting \\
		\hline
		Szeg\H{o} (1928) & $1/4$ & $f\in\mathcal{A}$, $n=1$, analytic \\
		\hline
		Li--Ponnusamy (2013), Theorem F & $1/2$ & $f\in \mathscr{P}_{\mathcal{H}}^0$, $n=1$ \\
		\hline
		\textbf{Theorem \ref{Th7-1} (present paper)} & $1/(2n^2)$ & $f\in \mathscr{P}_{\mathcal{H}_n^0}(\alpha)$, $n\geq 1$ \\
		\hline
	\end{tabular}
\end{table}

\begin{theo}\label{Th7-1} Let $f=h+\ol g\in \mathscr{P}_{\mathcal{H}_n^0}(\alpha)$ with $0\leq \alpha<1$ and be given by \emph{(\ref{Eq 1.7})}. Then for each $q \geq 2$, $S_{1,q}(f)\in \mathscr{P}_{\mathcal{H}_n^0}(\alpha)$ for $\|z\|_{\infty}< \frac{1}{2n^2}$.	
\end{theo}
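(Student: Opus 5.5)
We need to show that $S_{1,q}(f)=\sum_{j=1}^n z_j+\overline{S_q(g)}$ satisfies the defining inequality of $\mathscr{P}_{\mathcal{H}_n^0}(\alpha)$ on the smaller polydisc. The holomorphic part of $S_{1,q}(f)$ is $\sum_j z_j$, so $\partial S_1(h)/\partial z_k\equiv 1$. The left side of the defining inequality is therefore the constant $n(1-\alpha)$. The whole task reduces to proving
\begin{align*}
\sum_{k=1}^n\left|\frac{\partial S_q(g)(z)}{\partial z_k}\right|<n(1-\alpha)\qquad\text{for }\|z\|_\infty<\frac{1}{2n^2}.
\end{align*}
The normalization $\overline{\partial}S_{1,q}(f)(0)=0$ is automatic, since $S_q(g)$ starts with degree $2$.

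The plan is to bound the derivatives of $S_q(g)$ crudely via the coefficient estimate of Theorem \ref{Th-1.2}. Writing ${\bf r}=\|z\|_\infty$, we have
\begin{align*}
\sum_{k=1}^n\left|\frac{\partial S_q(g)}{\partial z_k}\right|\le\sum_{m=2}^{q}\sum_{|\beta|=m}|b_\beta|\Big(\sum_k\beta_k\Big){\bf r}^{m-1}=\sum_{m=2}^q m\,{\bf r}^{m-1}\sum_{|\beta|=m}|b_\beta|.
\end{align*}
Inserting $\sum_{|\beta|=m}|b_\beta|\le n\binom{m+n-1}{n-1}\frac{1-\alpha}{m}$ gives the bound
\begin{align*}
n(1-\alpha)\sum_{m=2}^{q}\binom{m+n-1}{n-1}{\bf r}^{m-1}.
\end{align*}
So it suffices to show $\sum_{m\ge2}\binom{m+n-1}{n-1}{\bf r}^{m-1}<1$ for ${\bf r}<1/(2n^2)$, uniformly in $q$, by passing to the full series.

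For the scalar inequality I would use the generating function $\sum_{m\ge0}\binom{m+n-1}{n-1}{\bf r}^m=(1-{\bf r})^{-n}$. This gives
\begin{align*}
\sum_{m\ge2}\binom{m+n-1}{n-1}{\bf r}^{m-1}=\frac{(1-{\bf r})^{-n}-1-n{\bf r}}{{\bf r}}.
\end{align*}
Alternatively, one can use the elementary bound $\binom{m+n-1}{n-1}\le n^{m}$, valid since each of the $m$ factors $(n-1+j)/j\le n$. Then the sum is at most $\sum_{m\ge2}n^m{\bf r}^{m-1}=n^2{\bf r}/(1-n{\bf r})$. For $n^2{\bf r}<1/2$ this is $<\frac{1/2}{1-1/(2n)}\le 1$ when $n\ge1$. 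At $n=1$ it equals $\frac{1/2}{1/2}=1$ with strict inequality since ${\bf r}<1/2$. In general $n^2{\bf r}<1-n{\bf r}$ iff ${\bf r}<1/(n^2+n)$, and $1/(2n^2)\le1/(n^2+n)$, so the inequality holds strictly. This delivers the radius $1/(2n^2)$ and, for $n=1$, recovers Theorem F's $1/2$.

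The main obstacle is only bookkeeping. First, the paper's class definition writes $\sum_{k=1}^\infty$, which must be read as $\sum_{k=1}^n$. Second, the strict inequality must survive after truncation, which it does because the truncated sum is dominated by the full series, itself strictly below $1$. A minor point is that $S_{1,q}(f)$ is naturally defined on all of $\mathbb{P}\Delta(0_n;1_n)$, so "membership for $\|z\|_\infty<1/(2n^2)$" is interpreted as the defining inequality holding on that polydisc. I would not attempt sharpness, since the statement claims none.
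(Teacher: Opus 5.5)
Your proposal is correct and follows essentially the same route as the paper. Both reduce the claim to $\sum_{k=1}^n\left|\partial S_q(g)/\partial z_k\right|<n(1-\alpha)$, bound this by $n(1-\alpha)\sum_{m=2}^{q}\binom{m+n-1}{n-1}\|z\|_\infty^{m-1}$ via Theorem~\ref{Th-1.2}, and then use $\binom{m+n-1}{n-1}\le n^m$. The only difference is cosmetic: the paper bounds each term $n^m\|z\|_\infty^{m-1}$ by $2^{-(m-1)}$ and sums, while you sum $\sum_{m\ge2}n^m\mathbf{r}^{m-1}=n^2\mathbf{r}/(1-n\mathbf{r})$ in closed form, which also shows the same estimates already work for $\mathbf{r}<1/(n^2+n)$.
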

\begin{proof}
Let $f=h+\ol g\in \mathscr{P}_{\mathcal{H}_n^0}(\alpha)$, where $h$ and $g$ are given by (\ref{Eq 1.7}). Note that
\begin{align*}
	S_{1,q}(f(z))=S_1(h(z))+\ol{S_q(g(z))}=\sum\limits_{j=1}^n z_j+\ol{\sum\limits_{m=2}^q\sum_{|\beta|=m}b_{\beta}z^{\beta}}.
\end{align*}

Also we see that
\begin{align*}
	\frac{\partial S_1(h(z))}{\partial z_j}=1
\end{align*}
and so 
\begin{align*}
\sum\limits_{j=1}^n\Re\left(\frac{\partial S_1(h(z))}{\partial z_j}-\alpha\right)=n(1-\alpha).
\end{align*}

On the other hand, from (\ref{Th2:1.5a}) and (\ref{Th2:1.8a}), we deduce that
\begin{align}\label{Th7:1.1}
&\sum\limits_{k=1}^n\left|\frac{\partial S_q(g(z))}{\partial z_k}\right|\\\leq&
\sum\limits_{k=1}^n\sum\limits_{m=2}^{q}\sum\limits_{|\beta|=m}\beta_{k} \left|b_{\beta}z_1^{\beta_1}\ldots z_{j-1}^{\beta_{j-1}}z_{j}^{\beta_j}z_{j+1}^{\beta_{j+1}}\ldots z_{k-1}^{\beta_{k-1}}z_{k}^{\beta_k-1}z_{k+1}^{\beta_{k+1}}\ldots z_n^{\beta_n}\right|\nonumber
\\\leq &
\sum\limits_{k=1}^n\sum\limits_{m=2}^{q}\sum\limits_{|\beta|=m}\beta_{k} \left|b_{\beta}\right|\|z\|_{\infty}^{m-1}
\nonumber\\=&
\sum\limits_{m=2}^{q}\sum\limits_{|\beta|=m}m|b_{\beta}|\|z\|_{\infty}^{m-1}\nonumber.
\end{align}
Using Theorem \ref{Th-1.2} to \eqref{Th7:1.1}, we obtain
\begin{align*}
 \sum\limits_{k=1}^n\left|\frac{\partial S_q(g(z))}{\partial z_k}\right|\leq&n(1-\alpha) \sum\limits_{m=2}^q \binom{m+n-1}{n-1}\|z\|_{\infty}^{m-1}\\\leq&n(1-\alpha)
 \sum\limits_{m=2}^q n^{m}\|z\|_{\infty}^{m-1}\\<&n(1-\alpha)
 \sum\limits_{m=2}^q  \frac{n^{m}}{2^{m-1}n^{2(m-1)}}\\\leq&n(1-\alpha)
 \sum\limits_{m=2}^q \frac{1}{2^{m-1}}\\<& n(1-\alpha)=\sum\limits_{j=1}^n\Re\left(\frac{\partial S_1(h(z))}{\partial z_j}-\alpha\right).
 \end{align*}
 
 Therefore $S_{1,q}(f)\in \mathscr{P}_{\mathcal{H}_n^0}(\alpha)$.
\end{proof}

\begin{cor}\label{cor-1} Let $f=h+\ol g\in \mathscr{P}_{\mathcal{H}_n^0}$ and be given by \emph{(\ref{Eq 1.7})}. Then for each $q \geq 2$, $S_{1,q}(f)\in \mathscr{P}_{\mathcal{H}_n^0}$ for $\|z\|_{\infty}< \frac{1}{2n^2}$.	
\end{cor}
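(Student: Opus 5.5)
The plan is to obtain Corollary \ref{cor-1} as the special case $\alpha=0$ of Theorem \ref{Th7-1}. First I will check that the classes coincide: with $\alpha=0$ the defining inequality of $\mathscr{P}_{\mathcal{H}_n^0}(\alpha)$ becomes
\[
\sum_{k=1}^n\Re\left(\frac{\partial h(z)}{\partial z_k}\right)>\sum_{k=1}^n\left|\frac{\partial g(z)}{\partial z_k}\right|,
\]
which is exactly the condition defining $\mathscr{P}_{\mathcal{H}_n^0}$. So $\mathscr{P}_{\mathcal{H}_n^0}=\mathscr{P}_{\mathcal{H}_n^0}(0)$, and $\alpha=0$ is admitted, since Theorem \ref{Th7-1} is stated for $0\le\alpha<1$.

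Next I will apply Theorem \ref{Th7-1} with $\alpha=0$. For every $q\ge 2$ and every $z$ with $\|z\|_\infty<1/(2n^2)$, it gives
\[
\sum_{k=1}^n\left|\frac{\partial S_q(g(z))}{\partial z_k}\right|<n=\sum_{j=1}^n\Re\left(\frac{\partial S_1(h(z))}{\partial z_j}\right).
\]
This is exactly membership of $S_{1,q}(f)$ in $\mathscr{P}_{\mathcal{H}_n^0}$ on that polydisc.

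For completeness I would also trace the ingredients of Theorem \ref{Th7-1} at $\alpha=0$ to confirm nothing degenerates. Theorem \ref{Th-1.2} gives $\sum_{|\beta|=m}|b_\beta|\le n\binom{m+n-1}{n-1}/m$. The proof then uses $\binom{m+n-1}{n-1}\le n^m$, and the geometric sum $\sum_{m\ge2}2^{-(m-1)}<1$ on $\|z\|_\infty<1/(2n^2)$. All of these are independent of $\alpha$.

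I expect no real obstacle. The only point needing care is checking that $\alpha=0$ lies in the admissible parameter range, so that the specialization is legitimate.
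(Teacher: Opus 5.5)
Your proposal is correct and follows the paper's route: the paper states the corollary without a separate proof, as the case $\alpha=0$ of Theorem~\ref{Th7-1}, and $\mathscr{P}_{\mathcal{H}_n^0}=\mathscr{P}_{\mathcal{H}_n^0}(0)$ follows directly from the definitions. One small slip in your side remark, which does not affect the argument: the infinite series $\sum_{m\ge 2}2^{-(m-1)}$ equals $1$, so the strict inequality should come from the finite sum $\sum_{m=2}^{q}2^{-(m-1)}=1-2^{1-q}<1$, as in the paper's proof.
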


\vspace{5mm}

\noindent\textbf{Conflict of interest:} The authors declare that there is no conflict  of interest regarding the publication of this paper.\vspace{1.2mm}

\noindent {\bf Funding:} Not Applicable.\vspace{1.2mm}

\noindent\textbf{Data availability statement:}  Data sharing not applicable to this article as no datasets were generated or analysed during the current study.\vspace{1.2mm}

\noindent {\bf Authors' contributions:} All the authors have equal contributions in preparation of the manuscript.

\end{document}